\newcommand{\invG}{\mathsf{G}_{\mathsf{inv}}}
\newcommand{\KinvG}{\mathbf{K}\mathsf{G}_{\mathsf{inv}}}
\newcommand{\KblG}{\mathbf{K}\mathsf{G}_{\mathsf{bl}}}
\newcommand{\coimplies}{\Yleft}
\newcommand{\Gsquare}{\mathsf{G}^2}
\newcommand{\KGsquare}{\mathbf{K}\mathsf{G}^2}
\newcommand{\KG}{\mathbf{K}\mathsf{G}}
\newcommand{\KbiG}{\mathbf{K}\mathsf{biG}}
\newcommand{\bimodalL}{\mathscr{L}_{\Box,\lozenge}}
\newcommand{\pspace}{\mathsf{PSpace}}
\newcommand{\fullLbilattice}{\mathscr{L}^{\mathsf{bl}}_{\Box,\blacksquare}}
\newcommand{\infcoimplies}{-\!\!\!\sqsubset\!}
\newcommand{\Prop}{\mathtt{Prop}}
\newcommand{\invol}{{\sim_\mathsf{i}}}
\newcommand{\infoGsquare}{\mathsf{G}^{2\pm}_{\blacksquare,\blacklozenge}}
\newcommand{\bimodalLinv}{\mathscr{L}_{\invol,\Box,\lozenge}}
\newcommand{\bimodalLtriangle}{\mathscr{L}_{\triangle,\Box,\lozenge}}
\newcommand{\both}{\mathbf{B}}
\newcommand{\neither}{\mathbf{N}}
\newcommand{\infneg}{{\sim^{\mathbf{I}}}}
\newcommand{\inftriangle}{{\triangle^{\mathbf{I}}}}
\newcommand{\FKinvG}{\KinvG^\mathsf{F}}
\newcommand{\FKblG}{\KblG^\mathsf{F}}
\newcommand{\Fmsf}{\mathsf{F}}
\newcommand{\Gmsf}{\mathsf{G}}
\newcommand{\Rmsf}{\mathsf{R}}
\newcommand{\Smsf}{\mathsf{S}}
\newcommand{\smsf}{\mathsf{s}}
\newcommand{\Tmsf}{\mathsf{T}}
\newcommand{\Wmsf}{\mathsf{W}}
\newcommand{\TKblG}{\mathcal{T}\!(\KblG)}
\newcommand{\real}{\mathsf{rl}}
\newcommand{\Var}{\mathsf{Var}}
\newcommand{\Str}{\mathsf{Str}}
\newcommand{\triangletop}{\triangle^\top}
\newtheorem{convention}{Convention}
\begin{document}
\allowdisplaybreaks
\setlength{\jot}{0pt} 
\setlength{\abovedisplayskip}{2pt}
\setlength{\belowdisplayskip}{2pt}
\setlength{\abovedisplayshortskip}{1pt}
\setlength{\belowdisplayshortskip}{1pt}
\setlength{\abovecaptionskip}{5pt plus 3pt minus 2pt}
\setlength{\belowcaptionskip}{5pt plus 3pt minus 2pt}
\title{Simple tableaux for two expansions\\of G\"{o}del modal logic\thanks{The research of Marta B\'ilkov\'a and Thomas Ferguson was supported by the grant 22-01137S of the Czech Science Foundation. The research of Daniil Kozhemiachenko is part of the project INTENDED (ANR-19-CHIA-0014).}}
%
\author{Marta B\'ilkov\'a\inst{1}\orcidID{0000-0002-3490-2083} \and Thomas Ferguson\inst{1,2}\orcidID{0000-0002-6494-1833}\and Daniil Kozhemiachenko\inst{3}\orcidID{0000-0002-1533-8034}}
\authorrunning{B\'ilkov\'a et al.}
\institute{The Czech Academy of Sciences, Institute of Computer Science, Prague\\
\email{bilkova@cs.cas.cz}
\and
Department of Cognitive Science, Rensselaer Polytechnic Institute, Troy, USA\\
\email{tferguson@gradcenter.cuny.edu}
\and
Univ. Bordeaux, CNRS, Bordeaux INP, LaBRI, UMR 5800, Talence, France\\
\email{daniil.kozhemiachenko@u-bordeaux.fr}}
\maketitle              
\begin{abstract}
This paper considers two logics. The first one, $\KinvG$, is an expansion of the G\"{o}del modal logic $\KG$ with the involutive negation~$\invol$ defined as $v(\invol\phi,w)=1-v(\phi,w)$. The second one, $\KblG$, is the expansion of $\KinvG$ with the bi-lattice connectives and modalities. We explore their semantical properties w.r.t.\ the standard semantics on $[0,1]$-valued Kripke frames and define a~unified tableaux calculus that allows for the explicit countermodel construction. For this, we use an alternative semantics with the finite model property. Using the tableaux calculus, we construct a~de\-ci\-sion algorithm and show that satisfiability and validity in $\KinvG$ and $\KblG$ are $\pspace$-complete.
\keywords{constraint tableaux \and G\"{o}del modal logic \and involutive negation \and bi-lattice logics.}
\end{abstract}
\section{Introduction\label{sec:introduction}}
When people evaluate their uncertainty or beliefs, they usually do it in two ways: either quantitatively, by assigning a specific number to the value of the belief, e.g., \emph{‘the rain today is $73\%$ likely’} or qualitatively, by comparing degrees of two beliefs without explicitly mentioning their numerical values as in \emph{‘I think that my wallet is more likely to be in the drawer than in the bag’}. In most cases, a~person does not assign exact values to their beliefs, and thus, reasons qualitatively. Formally, this can be represented via G\"{o}del modal logic $\KG$ or, if one wishes to construe ‘rather’ as ‘strictly more likely’, its expansion with $\triangle$ (Baaz' Delta~\cite{Baaz1996}) or $\coimplies$ (co-implication~\cite{Rauszer1974}\footnote{The symbol $\coimplies$ is due to~\cite{Gore2000}.}) $\KbiG$.

Note that it is natural not only to compare degrees of certainty in two different events but also to say something like \emph{‘my wallet is likely in the drawer’} meaning \emph{‘I think that my wallet is in the drawer rather than elsewhere’}. I.e., the (subjective) likelihood of the wallet being in the drawer is greater than~$\frac{1}{2}$. As G\"{o}del logic ($\Gmsf$) can express only $0$ and $1$, such statements cannot be formalised in $\KG$ or $\KbiG$. However, $\frac{1}{2}$ is expressible with the involutive negation $\invol$ defined as $v(\invol\phi)=1-v(\phi)$: $v(p\leftrightarrow\invol p)=1$ iff $v(p)=\frac{1}{2}$.

In this paper, we will present modal logics that can formalise this kind of reasoning about beliefs. Let us now provide a~broader context of our work.

\noindent
\textbf{G\"{o}del logics with additional negations}
All logics that we consider here expand the propositional G\"{o}del logic with involutive negation ($\invG$) that was introduced in~\cite{EstevaGodoHajekNavara2000}. Adding $\invol$ greatly enhances the expressivity of the G\"{o}del logic. Not only (as we mentioned above) can $\frac{1}{2}$ be defined but also coimplication and the Baaz Delta.
\begin{align*}
\phi\coimplies\chi&\coloneqq\invol(\invol\chi\rightarrow\invol\phi)&\triangle\phi&\coloneqq\mathbf{1}\coimplies(\mathbf{1}\coimplies\phi)
\end{align*}

Moreover, $\invG$ is \emph{paraconsistent} in the following sense: (1) $(p\wedge\invol p)\rightarrow q$ is not valid; (2) $p,\invol p\not\models_{\invG}q$ if $\phi\models_{\invG}\chi$ is interpreted as \emph{‘$v(\phi)\leq v(\chi)$, for each valuation $v$’}. These properties were investigated in~\cite{ErtolaEstevaFlaminioGodoNoguera2015,ConiglioEstevaGispertGodo2021}. Furthermore, $\invG$ is closely connected to $\Gsquare$ that was first discussed in~\cite{Ferguson2014} and independently studied in more detail in~\cite{BilkovaFrittellaKozhemiachenko2021,BilkovaFrittellaKozhemiachenkoMajer2023IJAR}. $\Gsquare$ is an expansion of $\Gmsf$ with a paraconsistent negation $\neg$ whose semantics is defined via two independent valuations $v_1$ (support of truth or positive support) and $v_2$ (support of falsity or negative support). It is shown in~\cite{Ferguson2014} that $\invG$ is obtained from $\Gsquare$ by setting $v_1(p)=1-v_2(p)$. 

\noindent
\textbf{G\"{o}del modal and description logics}
Modal expansions of G\"{o}del logics and their applications have been extensively studied. The $\Box$ and $\lozenge$ fragments\footnote{Note that $\Box$ and $\lozenge$ are not interdefinable in G\"{o}del modal logic.} of $\KG$ were axiomatised in~\cite{CaicedoRodriguez2010}. Hypersequent calculi were constructed in~\cite{MetcalfeOlivetti2009,MetcalfeOlivetti2011} and used to obtain the $\pspace$-completeness of both fragments. $\KG$ in the bi-modal language was axiomatised in~\cite{CaicedoRodriguez2015,RodriguezVidal2021} (for fuzzy and crisp\footnote{A frame $\langle W,R\rangle$ is called \emph{fuzzy} if $R:W\times W\rightarrow[0,1]$ and \emph{crisp} if $R:W\times W\rightarrow\{0,1\}$.} frames, respectively). Moreover, it was shown in~\cite{CaicedoMetcalfeRodriguezRogger2013,CaicedoMetcalfeRodriguezRogger2017} that crisp and fuzzy $\KG$ are $\pspace$-complete. 

The applications of G\"{o}del modal logics and its expansions are well researched. In~\cite{RodriguezTuytEstevaGodo2022}, the completeness of $\mathbf{K45}$ and $\mathbf{KD45}$ extensions of $\KG$ w.r.t.\ non-normalised and normalised possibilistic frames was established. In~\cite{AguileraDieguezFernandez-DuqueMcLean2022}, a~tem\-po\-ral logic expanding the G\"{o}del logic with co-implication was proposed. In addition, paraconsistent expansions of $\KG$ that can be used to reason about contradictory beliefs were proposed and studied in~\cite{BilkovaFrittellaKozhemiachenko2022IJCAR,BilkovaFrittellaKozhemiachenko2023IGPL}.

G\"{o}del description logics (DLs) were proposed in~\cite{BobilloDelgadoGomez-RamiroStraccia2009} to represent graded information in the ontologies and were further studied in~\cite{BobilloDelgadoGomez-RamiroStraccia2012}. Just as the classical description logics are notational variants of the classical logics with the global (universal) modality, G\"{o}del DLs can be considered a notational variant of global G\"{o}del modal logics. Moreover, G\"{o}del DLs are usually equipped with the involutive negation. They differ from \L{}ukasiewicz and Product fuzzy DLs because they are decidable and often have the same complexity as their classical counterparts even for expressive logics~\cite{BorgwardtDistelPenaloza2014DL,BorgwardtDistelPenaloza2014KR,Borgwardt2014PhD,BorgwardtPenaloza2017}.

\noindent
\textbf{Bi-lattice logics}
Our third source of inspiration is the work on bi-lattice logics. Two orders on a~bi-lattice can be interpreted as information order and truth order. The bi-lattice language has two sets of connectives: truth connectives and informational connectives.  Bi-lattices were used to analyse reasoning about beliefs and uncertainty in~\cite{Ginsberg1988} and further studied in~\cite{Rivieccio2010PhD,JansanaRivieccio2012}. In~\cite{JungRivieccio2013} and~\cite{RivieccioJungJansana2017}, the semantics and duality theory of modal bi-lattice logics were studied.

\noindent
\begin{minipage}{.74\linewidth}
\quad~~Note that $\Gsquare$ can be seen as a~bi-lattice logic (with truth connectives). Indeed, even though its semantics was originally defined via two independent valuations $v_1$ and $v_2$ on $[0,1]$, it is possible to combine them into one valuation on the bi-lattice $[0,1]^{\Join}$ (cf.~Fig.~\ref{fig:01join}). Likewise, in its modal extension $\KGsquare$, the semantics of $\Box$ and $\lozenge$ is defined using $\bigwedge$ and $\bigvee$~--- supremum and infimum w.r.t.\ the truth order.
\end{minipage}
\hfill
\begin{minipage}{.24\textwidth}
\resizebox{.92\linewidth}{!}{
\begin{tikzpicture}[>=stealth,relative]
\node (U1) at (0,-1.5) {$(0,1)$};
\node (U2) at (-1.5,0) {$(0,0)$};
\node (U3) at (1.5,0) {$(1,1)$};
\node (U4) at (0,1.5) {$(1,0)$};
\node (U5) at (0.2,0.6) {$\bullet$};
\node (U6) at (0.2,0.4) {$(x,y)$};
\node (tmin) at (-2,-1.5) {};
\node (tmax) at (-2,1.5) {};
\node (imin) at (-1.5,-2) {};
\node (imax) at (1.5,-2) {};
\path[-,draw] (U1) to (U2);
\path[-,draw] (U1) to (U3);
\path[-,draw] (U2) to (U4);
\path[-,draw] (U3) to (U4);
\draw[->,draw] (tmin)  edge node[above,rotate=90] {truth}  (tmax);
\draw[->,draw] (imin)  edge node[below] {information} (imax);
\end{tikzpicture}
}
\captionof{figure}{$[0,1]^{\Join}$}
\label{fig:01join}
\end{minipage}

In either approach, $v_1$ (the first coordinate) can be interpreted as the \emph{support of truth} (the degree to which the statement is \emph{asserted}) and $v_2$ (the second coordinate) as the \emph{support of falsity} (the degree to which the statement is \emph{denied}\footnote{Here, we differentiate between \emph{denials} or \emph{refutations}, i.e., assertions of falsity and \emph{rejections} --- lack of support. Cf.~\cite{BilkovaFrittellaKozhemiachenko2022IJCAR,BilkovaFrittellaKozhemiachenko2023IGPL,BilkovaFrittellaKozhemiachenko2023nonstandard} for more details concerning this distinction.}).

In~\cite{BilkovaFrittellaKozhemiachenko2023nonstandard}, $\infoGsquare$, an expansion of $\Gsquare$ with informational modalities $\blacksquare$ and $\blacklozenge$ was proposed and studied. As expected, the value of $\blacksquare\phi$ was computed using $\bigsqcap$ (informational infimum on $[0,1]^{\Join}$), and the value of $\blacklozenge\phi$ via $\bigsqcup$ (informational supremum). In this paper, we combine $\KGsquare$ and $\infoGsquare$ into one logic with the full set of truth and informational connectives.

\noindent
\textbf{This paper}
In this paper, we present $\KinvG$ and $\KblG$~--- expansions of $\KG$ with involution and with bi-lattice connectives and construct a tableaux calculus that can be used to establish their $\pspace$-completeness. Our contribution is three-fold. First, we establish the result analogous to the one in~\cite{Speranski2022} and construct faithful embeddings between $\KinvG$ and $\KblG$. Second, following~\cite{CaicedoMetcalfeRodriguezRogger2013,CaicedoMetcalfeRodriguezRogger2017}, we present an alternative (and equivalent) semantics for $\KinvG$ and $\KblG$ that possess the finite model property. Third, using this semantics, we construct a~sound and complete tableaux calculus by combining the ideas from~\cite{DAgostino1990}, \cite{Haehnle1994}, and~\cite{Rogger2016phd}.


The rest of the text is as follows. In Section~\ref{sec:KinvG}, we present the G\"{o}del modal logic with involution and discuss its semantical properties. Section~\ref{sec:KblG} is dedicated to the bi-lattice G\"{o}del modal logic $\KblG$. We present its language and semantics and construct faithful embeddings of $\KinvG$ and $\KblG$ into one another. In Section~\ref{sec:Fmodels}, we present an alternative semantics that we use in Section~\ref{sec:TKinvG} to construct a~sound and complete tableaux calculus. Using tableaux, we prove $\pspace$-completeness of $\KinvG$ and $\KblG$. Finally, in Section~\ref{sec:conclusion}, we summarise the results of the paper and provide a~roadmap for future research.
\section{G\"{o}del modal logic with involution\label{sec:KinvG}}
Let us present the language and semantics of $\KinvG$.
\begin{definition}[Frames]\label{def:frames}~
\begin{itemize}[noitemsep,topsep=2pt]
\item A \emph{fuzzy frame} is a tuple $\mathfrak{F}=\langle W,R\rangle$ with $W\neq\varnothing$ and $R:W\times W\rightarrow[0,1]$.
\item A \emph{crisp frame} is a tuple $\mathfrak{F}=\langle W,R\rangle$ with $W\neq\varnothing$ and $R:W\times W\rightarrow\{0,1\}$.
\end{itemize}
\end{definition}
\begin{convention}
Let $\mathfrak{F}=\langle W,R\rangle$ be a frame and $w\in W$. We use the following shorthand: $R(w)\coloneqq\{w':wRw'>0\}$.
\end{convention}
\begin{definition}[$\KinvG$]\label{def:KinvG}
We fix a countable set $\Prop$ and define the language $\bimodalLinv$ using the grammar below.
\begin{align}
\bimodalLinv\ni\phi&\coloneqq p\in\Prop\mid\invol\phi\mid(\phi\wedge\phi)\mid(\phi\rightarrow\phi)\mid\Box\phi\mid\lozenge\phi\label{equ:bimodalLinv}
\end{align}
A $\KinvG$ model is a tuple $\mathfrak{M}=\langle W,R,v\rangle$ with $\langle W,R\rangle$ being a frame and $v:\Prop\times W\rightarrow[0,1]$ (a $\KinvG$ valuation) extended to the complex formulas as follows.
\noindent
\begin{minipage}{.50\linewidth}
\noindent
\begin{align*}
v(\invol\phi,w)&=1\!-\!v(\phi,w)\\
v(\phi\!\rightarrow\!\chi,w)&=\begin{cases}1\text{ if }v(\phi,w)\!\leq\!v(\chi,w)\\v(\chi)\text{ otherwise}\end{cases}
\end{align*}
\end{minipage}
\hfill
\begin{minipage}{.5\linewidth}
\noindent
\begin{align*}
v(\phi\wedge\chi,w)&=\min(v(\phi,w),v(\chi,w))\\
v(\Box\phi,w)&=\inf\limits_{w'\in W}\{wRw'\!\rightarrow\!v(\phi,w')\}\\
v(\lozenge\phi,w)&=\sup\limits_{w'\in W}\{wRw'\!\wedge\!v(\phi,w')\}
\end{align*}
\end{minipage}

We say that $\phi\in\bimodalLinv$ is \emph{$\KinvG$-valid on a pointed frame $\langle\mathfrak{F},w\rangle$} ($\mathfrak{F},w\!\models_{\KinvG}\!\phi$) iff $v(\phi,w)=1$ for any model $\mathfrak{M}$ on $\mathfrak{F}$. $\phi$ is \emph{$\KinvG$ valid on frame $\mathfrak{F}$} ($\mathfrak{F}\!\models_{\KbiG}\!\phi$) iff $\mathfrak{F},w\!\models_{\KinvG}\!\phi$ for any $w\in\mathfrak{F}$.
\end{definition}
\begin{convention}
Given a formula $\phi$, we use $\Prop(\phi)$ to denote the set of variables occurring in it and $|\phi|$ for the number of symbols. We are going to use $\phi\leftrightarrow\chi$ as a shorthand for $(\phi\rightarrow\chi)\wedge(\chi\rightarrow\phi)$ and use the following defined connectives:
\begin{align}
\mathbf{1}&\coloneqq p\rightarrow p&\mathbf{0}&\coloneqq\invol\mathbf{1}&{\sim}\phi&\coloneqq\phi\rightarrow\mathbf{0}\nonumber\\
\phi\vee\chi&\coloneqq\invol(\invol\phi\wedge\invol\chi)&\phi\coimplies\chi&\coloneqq\invol(\invol\chi\rightarrow\invol\phi)&\triangle\phi&\coloneqq\mathbf{1}\coimplies(\mathbf{1}\coimplies\phi)\label{equ:invdefinedconnectives}
\end{align}

Additionally, we will use the following notation:
\begin{itemize}[noitemsep,topsep=2pt]
\item $\KinvG$ is the set of all $\bimodalLinv$-formulas valid over all frames;
\item $\bimodalLtriangle$ is the $\invol$-free fragment of $\bimodalLinv$ with $\triangle$ and $\sim$\footnote{Recall that $\vee$ can be defined in $\Gmsf$ as follows: $\phi\vee\chi\coloneqq((\phi\rightarrow\chi)\rightarrow\chi)\wedge((\chi\rightarrow\phi)\rightarrow\phi)$.} treated as primitive connectives; $\KbiG$ is the set of all $\bimodalLtriangle$-formulas valid over all frames;
\item $\bimodalL$ is the $\triangle$-free fragment of $\bimodalLtriangle$; $\KG$ is the set of all $\bimodalL$-formulas valid over all frames.
\end{itemize}
One can check that if $\phi\in\bimodalL$ is $\KG$-valid, it is also $\KbiG$-valid and $\KinvG$-valid and that if $\phi\in\bimodalLtriangle$ is $\KbiG$-valid, it is $\KinvG$-valid too.

Moreover, given a model $\mathfrak{M}$, we will write $W_\mathfrak{M}$, $R_\mathfrak{M}$, and $v_\mathfrak{M}$ to designate that $W$ is its the set, $R$ is its accessibility relation, and $v$ is its valuation.
\end{convention}
\begin{definition}[Frame definability]
Let $\mathscr{L}$ be a~language. A class $\mathbb{F}$ of frames is \emph{definable by a~for\-mu\-la $\phi\in\mathscr{L}$} iff the following condition holds:
\begin{align*}
\forall\mathfrak{F}:\mathfrak{F}\models_{\KinvG}\phi\text{ iff }\mathfrak{F}\in\mathbb{F}
\end{align*}
\end{definition}

Let us discuss several important semantical properties of $\KinvG$. First of all, it is easy to see that $\Box p\leftrightarrow\invol\lozenge\invol p$ defines the class of crisp frames. On the other hand, $\Box$ and $\lozenge$ are not interdefinable on fuzzy frames.
\begin{theorem}\label{theorem:Boxlozengeinterdefinability}~
\begin{enumerate}[noitemsep,topsep=2pt]
\item $\mathfrak{F}\models_{\KinvG}\Box p\leftrightarrow\invol\lozenge\invol p$ iff $\mathfrak{F}$ is crisp.
\item $\Box$ and $\lozenge$ are not interdefinable on fuzzy frames.
\end{enumerate}
\end{theorem}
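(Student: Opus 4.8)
The plan is to treat the two items separately. For (1) I would first use that in Gödel logic $v(\phi\leftrightarrow\chi,w)=1$ iff $v(\phi,w)=v(\chi,w)$, so that $\mathfrak{F}\models_{\KinvG}\Box p\leftrightarrow\invol\lozenge\invol p$ amounts to $v(\Box p,w)=v(\invol\lozenge\invol p,w)$ for every model on $\mathfrak{F}$ and every $w$. Pushing the outer $\invol$ inside with $1-\sup_i a_i=\inf_i(1-a_i)$ and $1-\min(a,b)=\max(1-a,1-b)$ rewrites the right-hand side as $v(\invol\lozenge\invol p,w)=\inf_{w'}\max(1-wRw',v(p,w'))$, to be compared termwise with $v(\Box p,w)=\inf_{w'}(wRw'\rightarrow v(p,w'))$. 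The single elementary fact I need is that, for fixed $w'$, the two integrands $wRw'\rightarrow v(p,w')$ and $\max(1-wRw',v(p,w'))$ agree for every value $v(p,w')$ exactly when $wRw'\in\{0,1\}$: the case $wRw'\in\{0,1\}$ is a two-line check, while for $0<wRw'=r<1$ they already differ at $v(p,w')=r$, where the implication is $1$ but the maximum is $\max(1-r,r)<1$.

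The two directions of (1) then follow at once. If $\mathfrak{F}$ is crisp the integrands agree pointwise, hence so do the two infima, giving $v(\Box p,w)=v(\invol\lozenge\invol p,w)$ for all $w$ and all valuations, i.e. validity. If $\mathfrak{F}$ is not crisp, I would fix $w_0,w_0'$ with $0<w_0Rw_0'=r<1$ and set $v(p,w_0')=r$ and $v(p,w')=1$ at every other $w'$; then all terms of both infima are $1$ except the one at $w_0'$, whence $v(\Box p,w_0)=1$ while $v(\invol\lozenge\invol p,w_0)=\max(1-r,r)<1$, so the biconditional fails.

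For (2) I would argue non-definability in each direction by exhibiting a pair of pointed fuzzy models that agree on every formula of the G\"odel-modal fragment generated by one modality --- i.e.\ in the involution-free language $\bimodalL$ --- but disagree on the other modality applied to $p$; working in $\bimodalL$ is essential, since by (1) it is precisely $\invol$ that would reinstate the duality. To witness that $\lozenge$ is not definable from $\{\Box,\wedge,\rightarrow\}$ I would compare the single-successor model with $wRw'=1$ and $v(p,w')=1$ (successor a dead end) against a model whose root is itself a dead end: the former has $v(\lozenge p)=1$, the latter $v(\lozenge p)=0$. To witness that $\Box$ is not definable from $\{\lozenge,\wedge,\rightarrow\}$ I would compare two roots whose (dead-end) successors carry edge-value pairs $\{(1,1),(\tfrac12,\tfrac12)\}$ and $\{(1,1),(1,\tfrac12)\}$ respectively; here $v(\Box p)=1$ against $v(\Box p)=\tfrac12$.

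Computing these separating values is routine; the heart of the matter --- and the step I expect to be the main obstacle --- is checking that each pair is indistinguishable by the whole opposite fragment. Both checks reduce to two facts about one-variable G\"odel terms $g$ (the effect of a formula at a dead-end successor depends only on the local value of $p$, all nested modalities collapsing to constants there). First, in the $\Box$-fragment there is no falsum, so every such $g$ satisfies $g(1)=1$; this makes the $(1,1)$-successor invisible to $\Box$, as its contribution to any $v(\Box\psi,w)$ is $1\rightarrow g(1)=1$, matching the dead-end root. Second, every one-variable G\"odel term over $\{0,1\}$ satisfies $g(\tfrac12)\le g(1)$ (a short closure computation: the reachable pairs $(g(\tfrac12),g(1))$ are only $(0,0),(\tfrac12,1),(1,1)$); this makes the payload successor invisible to $\lozenge$, since the $(1,1)$-dummy already contributes $g(1)\ge g(\tfrac12)\ge\min(\cdot,g(\tfrac12))$ to every $v(\lozenge\psi,w)$. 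A straightforward induction on formula complexity then propagates equality across the entire fragment, while the target modality separates the two models.
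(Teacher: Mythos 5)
Your treatment of item 1 is correct and follows essentially the paper's route: pointwise comparison of the two infima gives validity on crisp frames, and a non-crisp edge $w_0Rw_0'=r$ with $v(p,w_0')=r$ yields $v(\Box p,w_0)=1$ against $v(\invol\lozenge\invol p,w_0)=\max(1-r,r)<1$. Your explicit valuation is, if anything, a cleaner countermodel than the one in the paper.

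Item 2, however, has a genuine gap. The theorem is a statement about $\KinvG$, so ``not interdefinable'' means that no $\Box$-free formula of the full language $\bimodalLinv$ --- hence possibly containing $\invol$, and a fortiori $\mathbf{0}$ and ${\sim}$ --- is equivalent to $\lozenge p$, and dually; the paper's appendix proof accordingly quantifies over all $\tau\in\bimodalLinv$. Your justification for retreating to the falsum-free fragments $\{\Box,\wedge,\rightarrow\}$ and $\{\lozenge,\wedge,\rightarrow\}$ --- that ``by (1) it is precisely $\invol$ that would reinstate the duality'' --- inverts what item 1 says: item 1 shows that on fuzzy frames $\invol$ does \emph{not} reinstate the duality, so excluding it changes the statement rather than focusing it. The restriction is not cosmetic: both of your witnessing pairs are separated by opposite-fragment formulas once the full signature is admitted. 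For the first pair, $\Box{\sim}p$ (already in $\bimodalL$, which contains ${\sim}$) evaluates to $1$ at the dead-end root but to $1\rightarrow 0=0$ at the root with the weight-$1$ successor carrying $p=1$. For the second pair, take $g(a)=a\rightarrow(1-a)$, i.e.\ the formula $\lozenge(p\rightarrow\invol p)$: since $g(1)=0$ and $g(\tfrac12)=1$, it evaluates to $\max(0,\min(\tfrac12,1))=\tfrac12$ in the first model and to $\max(0,\min(1,1))=1$ in the second, precisely because $\invol$ breaks your monotonicity lemma $g(\tfrac12)\le g(1)$. The paper avoids this by using a single model whose two successors realise the value sets $\{0,\tfrac15,\tfrac45,1\}$ and $\{0,\tfrac14,\tfrac34,1\}$, each closed under $x\mapsto 1-x$ and linked by an order isomorphism commuting with all connectives including $\invol$; that is what lets the induction run over all of $\bimodalLinv$ and show that no $\Box$-free formula attains $\tfrac15$ (resp.\ no $\lozenge$-free formula attains $\tfrac14$) at the root. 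To repair your argument you would need witnessing pairs whose indistinguishability survives $\invol$ and $\mathbf{0}$, which is exactly the part your construction does not deliver.
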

\begin{proof}
We begin with 1. Observe that $v(\Box\phi,w)=\inf\{v(\phi,w'):wRw'\}$ and $v(\lozenge\phi,w)=\sup\{v(\phi,w'):wRw'\}$ on crisp frames. From here, it is evident that $\Box p\leftrightarrow\invol\lozenge\invol p$ is valid on crisp frames. For the converse, assume that $\mathfrak{F}$ is not crisp, i.e., there are $w,w'\in\mathfrak{F}$ s.t.\ $wRw'=x$ for some $0<x<1$. Now for every $w''\in W$, set $v(p,w'')=x''$ s.t.\ $x''\geq wRw''$ and $v(p,w'')=1$ only if $wRw''=1$. It is clear that $v(\Box p,w)=1$. On the other hand, we have that $v(\lozenge\invol p,w)>0$ since $0<v(\invol p,w')\leq1-x<1$ and $wRw'=x$. But then, $v(\invol\lozenge\invol p,w)\neq1$, as required.

For 2., take the model below (all variables have the same values as $p$) and note that $v(\Box p,w)=\frac{1}{5}$ and $v(\lozenge p,w)=\frac{1}{4}$.
\begin{align*}
\xymatrix{w'':p=\sfrac{1}{4}~&&\ar[ll]|{\sfrac{2}{3}}~w:p=0~\ar[rr]|{\sfrac{2}{3}}&&~w':p=\sfrac{1}{5}}
\end{align*}
It is straightforward but tedious to show by induction that (i) there is no $\Box$-free formula $\chi$ s.t.\ $v(\chi,w)\in\left\{\frac{1}{5},\frac{4}{5}\right\}$ and (ii) there is no $\lozenge$-free formula $\psi$ s.t.\ $v(\psi,w)\in\left\{\frac{1}{4},\frac{3}{4}\right\}$. (The proof is in Section~\ref{ssec:theorem:Boxlozengeinterdefinability}.)
\end{proof}

Let us remark on the expressivity of $\bimodalLinv$. Recall from the~\nameref{sec:introduction} that we want to formalise statements where the degree of belief is compared to~$\frac{1}{2}$. We use the following statement as an example.
\begin{example}\label{example:formalisation}
\begin{description}[noitemsep,topsep=2pt]
\item[$\mathsf{wal.}$:] \textit{I think that my wallet is likely in the drawer.}
\end{description}
We write $d$ for the ‘my wallet is in the drawer’ and use $\Box$ for ‘I think that’. Now, we need to formalise that the degree of subjective likelihood of $d$ is greater than~$\sfrac{1}{2}$, i.e., we should write a formula that is true at $w$ when $v(\Box d,w)>\frac{1}{2}$.

We formalise $\mathsf{wal.}$ as follows: $\phi_\mathsf{wal.}\coloneqq\invol\triangle(\Box d\rightarrow\invol\Box d)$. A straightforward check reveals that $v(\phi_\mathsf{wal.},w)=1$ iff $v(\Box d,w)>\frac{1}{2}$.
\end{example}

In~\cite{BilkovaFrittellaKozhemiachenko2022IJCAR,BilkovaFrittellaKozhemiachenko2023IGPL,BilkovaFrittellaKozhemiachenko2023nonstandard}, models $\mathfrak{M}=\langle W,R,v\rangle$ are treated as follows: $W$ is the set of sources; the values $wRw'$ represent the degree of trust one source gives to the other; $v$~stands for the statements asserted by the sources. Using this interpretation, \emph{$w'$ is (un)reliable according to $w$} if $wRw'>\frac{1}{2}$ ($wRw'<\frac{1}{2}$, respectively).

We finish the section with a proof that $\bimodalLinv$ defines frames where \emph{all sources are unreliable} (i.e., $wRw'<\frac{1}{2}$ for all $w,w'\in W$) and where \emph{every $w$~has a reliable source $w'$}. We also show that neither of these classes of frames is definable in $\bimodalLtriangle$.
\begin{theorem}\label{theorem:framedefinability}~
\begin{enumerate}[noitemsep,topsep=2pt]
\item $\mathfrak{F}\models_{\KinvG}\invol\triangle(\invol\lozenge\mathbf{1}\rightarrow\lozenge\mathbf{1})$ iff $\sup\{wRw':w,w'\in\mathfrak{F}\}<\frac{1}{2}$.
\item $\mathfrak{F}\models_{\KinvG}\invol\triangle(\lozenge\mathbf{1}\rightarrow\invol\lozenge\mathbf{1})$ iff for every $w\in W$ there is $w'$ s.t.\ $wRw'>\frac{1}{2}$.
\item Let $0<x,x'<1$, and $\mathfrak{F}=\langle\{w\},R\rangle$ and $\mathfrak{F}'=\langle\{w'\},R'\rangle$ be s.t.\ $wRw=x$ and $w'R'w'=x'$. Then $\mathfrak{F}\models_{\KbiG}\phi$ iff $\mathfrak{F}'\models_{\KbiG}\phi$ for all $\phi\in\bimodalLtriangle$.
\end{enumerate}
\end{theorem}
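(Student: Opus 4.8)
The plan is to reduce items~1 and~2 to the single quantity $v(\lozenge\mathbf{1},w)=\sup_{w'}\{wRw'\}$, which I abbreviate by $s_w$, and to handle item~3 by an order-isomorphism between the one-point frames. For items~1 and~2 the identity for $s_w$ is immediate, since $v(\mathbf{1},w')=1$ everywhere; and because those formulas contain no propositional variable whose value matters (only $\mathbf{1}$ occurs), frame-validity collapses to a purely numerical condition on $s_w$ holding at every $w$, with no quantification over valuations.

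For items~1 and~2 I would first record how the defined connectives act. Unwinding $\coimplies$ gives $v(\phi\coimplies\chi,w)=v(\phi,w)$ when $v(\phi,w)>v(\chi,w)$ and $0$ otherwise, and hence $v(\triangle\psi,w)=1$ if $v(\psi,w)=1$ and $0$ otherwise. Writing $a:=s_w$, so that $v(\invol\lozenge\mathbf{1},w)=1-a$, the G\"{o}del clause for $\rightarrow$ yields for item~1 that $v(\invol\lozenge\mathbf{1}\rightarrow\lozenge\mathbf{1},w)=1$ exactly when $1-a\leq a$, i.e.\ $a\geq\frac12$; peeling off $\triangle$ and $\invol$ turns this into $v(\invol\triangle(\invol\lozenge\mathbf{1}\rightarrow\lozenge\mathbf{1}),w)=1$ iff $s_w<\frac12$. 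Symmetrically one obtains $v(\invol\triangle(\lozenge\mathbf{1}\rightarrow\invol\lozenge\mathbf{1}),w)=1$ iff $s_w>\frac12$ for item~2, and frame-validity is the conjunction of these over all $w$. For item~2 the passage from $\sup_{w'}wRw'>\frac12$ to an actual witness $w'$ with $wRw'>\frac12$ is exact, because a supremum strictly above $\frac12$ cannot be approached from below by values all $\leq\frac12$, which gives the stated equivalence cleanly. For item~1 the pointwise condition is $s_w<\frac12$ at every $w$; the delicate point is only the boundary, since on infinite frames the local suprema could approach $\frac12$ without the global $\sup\{wRw':w,w'\}$ being strictly below it, so I would either restrict to the finitely-branching case or phrase the condition pointwise as $s_w<\frac12$ for every $w$.

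For item~3 the idea is an order-isomorphism transfer. On a one-point reflexive frame with loop $x\in(0,1)$ the modal clauses degenerate to unary operations on $[0,1]$: $v(\Box\phi,w)=x\rightarrow v(\phi,w)$ and $v(\lozenge\phi,w)=\min(x,v(\phi,w))$. Thus validity on $\mathfrak{F}$ is evaluation in a truth-functional matrix whose operations---$\min$, $\max$, G\"{o}del $\rightarrow$, $\sim$, $\triangle$, and the two loop-parametrised modal maps---are all determined by the linear order on $[0,1]$ together with the points $0$, $1$, $x$. I would take the piecewise-linear order-automorphism $f$ of $[0,1]$ fixing $0$ and $1$ with $f(x)=x'$, and check by a routine induction that $f$ commutes with every connective of $\bimodalLtriangle$ and with $\Box,\lozenge$ (e.g.\ $f(\min(x,a))=\min(x',f(a))$ and $f(x\rightarrow a)=x'\rightarrow f(a)$). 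Since $f$ is a bijection fixing $1$, for each valuation $v$ on $\mathfrak{F}$ the valuation $f\circ v$ on $\mathfrak{F}'$ satisfies $v(\phi,w)=1$ iff $(f\circ v)(\phi,w')=1$, and $v\mapsto f\circ v$ is a bijection between the valuations of the two frames; hence $\mathfrak{F}\models_{\KbiG}\phi$ iff $\mathfrak{F}'\models_{\KbiG}\phi$.

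The routine computations for items~1 and~2 are not where the difficulty lies; the crux is item~3, and specifically the observation that $f$ is a matrix isomorphism precisely because every connective of $\bimodalLtriangle$ is order-theoretic. This is exactly what fails once $\invol$ is present---$f$ would then have to satisfy $f(1-a)=1-f(a)$, pinning down $\frac12$---which is why items~1 and~2 are expressible with $\invol$ while item~3 shows their defining classes are \emph{not} $\bimodalLtriangle$-definable (a single-loop frame with $x<\frac12$ and one with $x>\frac12$ lie on opposite sides of both classes yet validate the same $\bimodalLtriangle$-formulas). I therefore expect the main effort to be in correctly bookkeeping the degenerate modal clauses on the one-point frame and verifying the inductive isomorphism step.
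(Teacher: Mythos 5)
Your proposal is correct and follows essentially the same route as the paper: items 1 and 2 are reduced to the single quantity $v(\lozenge\mathbf{1},w)=\sup_{w'}wRw'$ followed by a direct computation with $\rightarrow$, $\triangle$ and $\invol$, and item 3 is proved by transporting valuations along an order-preserving map fixing $0$ and $1$ and sending $x$ to $x'$, with an induction showing it commutes with every $\bimodalLtriangle$-connective (the paper uses a pair of maps $h,h'$ where you use a single order-automorphism, which is immaterial). Your caveat on item 1 is in fact a defect of the paper's own proof rather than of yours: the paper passes from ``$\sup_{w'}wRw'<\frac{1}{2}$ at every $w$'' to ``$\sup\{wRw':w,w'\in\mathfrak{F}\}<\frac{1}{2}$'' without justification, and on an infinite frame whose local suprema approach $\frac{1}{2}$ from below this step fails, so the pointwise reformulation you propose is the condition that is actually defined by the formula.
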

\begin{proof}
We prove 1., give a sketch of the proof of 3., and put the proof of 2.\ in the appendix (Section~\ref{ssec:theorem:framedefinability}). To check 1., assume that $\mathfrak{F}\models_{\KinvG}\invol\triangle(\invol\lozenge\mathbf{1}\rightarrow\lozenge\mathbf{1})$. Then, it means that $v(\invol\lozenge\mathbf{1},w)>v(\lozenge\mathbf{1},w)$ (i.e., $v(\lozenge\mathbf{1},w)<\frac{1}{2}$) for every $w\in\mathfrak{F}$ and every $v$ on $\mathfrak{F}$. But $v(\lozenge\mathbf{1},w)=\sup\{wRw':w'\in W\}$. Hence, $\sup\{wRw':w,w'\in\mathfrak{F}\}<\frac{1}{2}$, as required. For the converse, let $\sup\{wRw':w,w'\in\mathfrak{F}\}\geq\frac{1}{2}$ and assume w.l.o.g.\ that $w\in\mathfrak{F}$ is s.t.\ $\sup\{wRw':w'\in W\}\geq\frac{1}{2}$. Thus, $v(\lozenge\mathbf{1},w)\geq\frac{1}{2}$, i.e., $v(\invol\triangle(\invol\lozenge\mathbf{1}\rightarrow\lozenge\mathbf{1}),w)=0$, as required.

For 3., consider a function $h:[0,1]\rightarrow[0,1]$ s.t.\ $h(0)=0$, $h(x)=x'$, $h(1)=1$ and also satisfying $y\leq z$ iff $h(y)\leq h(z)$ and define $v'(p,w')=h(v(p,w))$. One can check by induction on $\phi\in\bimodalLtriangle$ that if $v(\phi,w)=y$, then $v'(\phi,w)=h(y)$. To obtain the converse direction, use a~function $h':[0,1]\rightarrow[0,1]$ s.t.\ $h(0)=0$, $h(x')=x$, $h(1)=1$ and also satisfying $y\leq z$ iff $h(y)\leq h(z)$ and define $v(p,w)=h'(v'(p,w'))$. The proof is analogous. (Cf.~Section~\ref{ssec:theorem:framedefinability3} for the complete proof.)
\end{proof}

It is immediate from item 3.\ that the classes of frames in items 1.\ and 2. of Theorem~\ref{theorem:framedefinability} are not definable in $\bimodalLtriangle$.
\section{Bi-lattice G\"{o}del modal logic\label{sec:KblG}}
Let us present $\KblG$. We adopt the approach of~\cite{OdintsovWansing2010,OdintsovWansing2017,Drobyshevich2020} and split $\KblG$ valuations into two parts: support of truth ($v_1$) and support of falsity ($v_2$). This will simplify establishing embeddings between $\KinvG$ and $\KblG$ and constructing a unified tableaux calculus for both of them. Additionally, we follow~\cite{BilkovaFrittellaKozhemiachenko2023nonstandard} and use two relations on a frame: $R^+$ to compute the support of truth of modal formulas and $R^-$ to compute their support of falsity. When there is no confusion, we write $v(\phi,w)=(x,y)$ to stand for $v_1(\phi,w)=x$ and $v_2(\phi,w)=y$.
\begin{definition}[$\KblG$]\label{def:KblG}
The language $\fullLbilattice$ is constructed from $\Prop$ via the following grammar.
\begin{align}
\fullLbilattice\ni\phi&\coloneqq p\in\Prop
\left|\begin{matrix}
\neg\phi\mid(\phi\wedge\phi)\mid(\phi\rightarrow\phi)\mid\Box\phi\mid\lozenge\phi\\
-\phi\mid(\phi\sqcap\phi)\mid(\phi\sqsupset\phi)\mid\blacksquare\phi\mid\blacklozenge\phi
\end{matrix}\right.\label{equ:Lbilattice}
\end{align}
A $\KblG$ model is a tuple $\mathfrak{M}=\langle W,R^+,R^-,v_1,v_2\rangle$ s.t.\ $\langle W,R^+,R^-\rangle$ is a bi-relational frame and $v_1,v_2:\Prop\times W\rightarrow[0,1]$ ($\KblG$ valuations) are extended to the complex formulas as follows (below\footnote{We will use this notation throughout the rest of the paper.}, $i,j\in\{1,2\}$, $i\neq j$, $\&\in\{\wedge,\sqcap\}$, $\Rightarrow\in\{\rightarrow,\sqsupset\}$, $\xcancel{\Box}\in\{\Box,\blacksquare\}$, and $\xcancel{\lozenge}\in\{\lozenge,\blacklozenge\}$).
\begin{align*}
v_i(\neg\phi,w)&=v_j(\phi,w)&v_i(-\phi,w)&=1-v_j(\phi,w)\\
v_1(\phi\&\chi,\!w)&=\min(v_1(\phi,w),v_1(\chi,w))&v_1(\phi\!\!\Rightarrow\!\!\chi,\!w)&=\!\begin{cases}1\text{ if }v_1(\phi,\!w)\!\leq\!v_1(\chi,\!w)\\v_1(\chi,w)\text{ else}\end{cases}\\
v_2(\phi\!\wedge\!\chi,\!w)&=\max(v_2(\phi,w),v_2(\chi,w))&v_2(\phi\!\!\rightarrow\!\!\chi,\!w)&=\!\begin{cases}0\text{ if }v_2(\chi,\!w)\!\leq\!v_2(\phi,\!w)\\v_2(\chi,w)\text{ else}\end{cases}\\
v_2(\phi\!\sqcap\!\chi,\!w)&=\min(v_2(\phi,w),v_2(\chi,w))&v_2(\phi\!\!\sqsupset\!\!\chi,\!w)&=\!\begin{cases}1\text{ if }v_2(\phi,\!w)\!\leq\!v_2(\chi,\!w)\\v_2(\chi,w)\text{ else}\end{cases}
\end{align*}
\begin{align*}
v_1(\xcancel{\Box}\phi,w)&=\inf\limits_{w'\in W}\!\{wR^+w'\!\!\rightarrow\!\!v_1(\phi,\!w')\}&v_1(\xcancel{\lozenge}\phi,\!w)&=\sup\limits_{w'\in W}\!\{wR^+w'\wedge v_1(\phi,\!w')\}\\
v_2(\Box\phi,\!w)&=\sup\limits_{w'\in W}\!\{wR^-w'\wedge v_2(\phi,\!w')\}&v_2(\lozenge\phi,\!w)&=\inf\limits_{w'\in W}\!\{wR^-w'\!\!\rightarrow\!\!v_2(\phi,\!w')\}\\
v_2(\blacksquare\phi,w)&=\inf\limits_{w'\in W}\!\{wR^-w'\!\!\rightarrow v_2(\phi,\!w')\}&
v_2(\blacklozenge\phi,w)&=\sup\limits_{w'\in W}\!\{wR^-w'\!\wedge\!v_2(\phi,\!w')\}
\end{align*}

We say that $\phi$ is \emph{$v_1$-valid ($v_2$-valid) on a~pointed frame $\langle\mathfrak{F},\!w\rangle$} ($\mathfrak{F},\!w\!\models^+_{\KblG}\!\phi$ and $\mathfrak{F},w\models^-_{\KblG}\phi$, respectively) iff $v_1(\phi,w)=1$ ($v_2(\phi,w)=0$) for every model $\mathfrak{M}$ on $\mathfrak{F}$. $\phi$ is \emph{strongly valid on $\langle\mathfrak{F},w\rangle$} ($\mathfrak{F},w\models_{\KblG}\phi$) iff it is $v_1$ and $v_2$-valid. $\phi$ is $v_1$-valid ($v_2$-valid, strongly valid) on $\mathfrak{F}$, iff $\mathfrak{F},w\models^+_{\KblG}\phi$ ($\mathfrak{F},w\models^-_{\KblG}\phi$, $\mathfrak{F},w\models_{\KblG}\phi$, respectively) for every $w\in\mathfrak{F}$.
\end{definition}
\begin{convention}[Full bi-lattice language]
Let us expand $\fullLbilattice$. In particular, we define constants $\both$ and $\neither$ s.t.\ $v(\both,w)=(1,1)$ and $v(\neither,w)=(0,0)$ and the involutive negation $\invol$ s.t.\ $v(\invol\phi,w)=(1-v_1(\phi,w),1-v_2(\phi,w))$.
\begin{align}
\mathbf{1}&\coloneqq p\rightarrow p&\mathbf{0}&\coloneqq\neg\mathbf{1}&\phi\!\vee\!\chi&\coloneqq\neg(\neg\phi\!\wedge\!\neg\chi)&\phi\coimplies\chi&\coloneqq\neg(\neg\chi\!\rightarrow\!\neg\phi)\nonumber\\
\both&\coloneqq p\sqsupset p&\neither&\coloneqq-\both&\phi\!\sqcup\!\chi&\coloneqq-(-\phi\!\sqcap\!-\chi)&\phi\infcoimplies\chi&\coloneqq-(-\chi\!\sqsupset\!-\phi)\nonumber\\
\triangle\phi&\coloneqq\mathbf{1}\!\coimplies\!(\mathbf{1}\!\coimplies\!\phi)&{\sim}\phi&\coloneqq\phi\!\rightarrow\!\mathbf{0}&\inftriangle\phi&\coloneqq\both\infcoimplies(\both\infcoimplies\phi)&\infneg\phi&\coloneqq\phi\sqsupset\neither\nonumber\\
\invol\phi&\coloneqq\neg\!-\!\phi\label{equ:bldefinedconnectives}
\end{align}
\end{convention}

As we see, $\fullLbilattice$ expands $\bimodalL$ with a~strong De Morgan negation $\neg$ and connectives defined w.r.t.\ the information order on $[0,1]^{\Join}$ (cf.~Fig.~\ref{fig:01join}): conjunction $\sqcap$, implication $\sqsupset$, a fuzzy version of Fitting's conflation $-$ from~\cite{Fitting1991} and two modalities $\blacksquare$ and $\blacklozenge$. We can construe modalities as aggregation strategies w.r.t.\ contradictory or incomplete information. $\Box$ is \emph{pessimistic} aggregation: the positive support of $\Box\phi$ is calculated via the infimum of the positive support of $\phi$ in the accessible states and the negative support of $\Box\phi$ using the supremum of the negative support of $\phi$. $\blacksquare$ is \emph{cautious} aggregation: positive and negative support of $\blacksquare\phi$ are computed via the infima of positive and negative supports of $\phi$ in the accessible states. Dually, $\lozenge$ and $\blacklozenge$ are \emph{optimistic} and \emph{credulous} aggregations. Note that if support of truth and support of falsity are not independent, $\blacksquare$ and $\Box$ (as well as $\blacklozenge$ and $\lozenge$) coincide. Using two accessibility relations for each modality and interpreting them as degrees of trust, we can model biases one source can have towards the claims of another. If $wR^+w'>wR^-w'$, we can say that $w$ considers $w'$ too sceptical and downplays its denials in comparison to assertions. If $wR^+w'<wR^-w'$, then $w'$ is sensationalist from the point of view of $w$ and the latter is biased in favour of refutations by $w'$. We refer readers to~\cite{BilkovaFrittellaKozhemiachenko2022IJCAR,BilkovaFrittellaKozhemiachenko2023IGPL,BilkovaFrittellaKozhemiachenko2023nonstandard} for a~more detailed discussion of these aggregation strategies and modalities.

Let us now see how we can formalise the statement from Example~\ref{example:formalisation} in $\KblG$.
\begin{example}\label{example:blformalisation}
Recall $\mathsf{wal.}$: \textit{I think that my wallet is likely in the drawer}. We use the notation from Example~\ref{example:formalisation}. Since positive and negative support are independent in $\KblG$, we have two options to interpret ‘likely’: (i)~only the positive support of ‘I think that my wallet is in the drawer’ is greater than $\frac{1}{2}$; (ii) the positive support is greater than $\frac{1}{2}$ and the negative is less than~$\frac{1}{2}$. Here, (i) can be used when the agent has trusted sources that give contradictory information. E.g., the agent asks a~friend about the wallet and receives a response ‘you put it on the middle shelf of the drawer’ (although there are only two shelves). We set: $\phi^{(i)}_\mathsf{wal.}\coloneqq\both\rightarrow\invol\triangle(\Box d\rightarrow\invol\Box d)$.

If no source gives contradictory information, we can use (ii). First, we define $\triangletop\phi\coloneqq\triangle\phi\wedge\neg{\sim}\triangle\phi$. It is easy to see that
\begin{align*}
v(\triangletop\phi,w)&=\begin{cases}(1,0)&\text{ if }v(\phi,w)=(1,0)\\(0,1)&\text{ else}\end{cases}
\end{align*}
We formalise $\mathsf{wal.}$ as follows: $\phi^{(ii)}_\mathsf{wal.}\!\coloneqq\!\triangletop\!(\invol\Box d\!\rightarrow\!\Box d)\!\wedge\!\neg\triangletop\!(\Box d\!\rightarrow\!\invol\Box d)$.

One can check that
\begin{align*}
v(\phi^{(i)}_\mathsf{wal.},w)\!=\!(1,0)&\text{ iff }v_1(\Box d,w)\!>\!0.5
&
v(\phi^{(ii)}_\mathsf{wal.},w)\!=\!(1,0)&\text{ iff }\begin{matrix}v_1(\Box d,w)\!>\!0.5&\text{and}\\v_2(\Box d,w)\!<\!0.5\end{matrix}
\end{align*}
\end{example}

In the remainder of the section, we will show how to embed $\KinvG$ and $\KblG$ into one another. As $\KblG$ is defined on \emph{bi-relational} frames in the general case, we will construct the embeddings between $\KinvG(2)$ ($\KinvG$ on bi-relational frames with two pairs of modalities: $\Box_1$, $\lozenge_1$, $\Box_2$, and $\lozenge_2$\footnote{We will denote this language with $\bimodalLinv(2)$}) and $\KblG$.

Note, however, that $\KblG$ \emph{does not extend $\KinvG$ w.r.t.\ strong validity}. Indeed, consider the frame $\left\langle\left\{w\right\},\left\{wRw=\frac{1}{2}\right\}\right\rangle$. It is easy to check that $v(\Box\mathbf{0}\vee{\sim}\Box\mathbf{0},w)=(1,\frac{1}{2})$ for any $v$. But $\Box\mathbf{0}\vee{\sim}\Box\mathbf{0}$ is $\KinvG$-valid. Moreover, $\neg\lozenge\neg\phi$ is not equivalent to $\Box\phi$ on bi-relational frames (and likewise, $\neg\blacklozenge\neg\phi$ is not equivalent to $\blacksquare\phi$), whence, there are no $\neg$ normal forms. To circumvent this, we introduce two connected translations of $\fullLbilattice$ formulas into $\bimodalLinv$.
\begin{definition}\label{def:+-translation}
Let $\phi\in\fullLbilattice$. For each $p\in\Prop(\phi)$, we set $p^*$ to be a~fresh variable and define the following translations $^\oplus$ and $^\ominus$.
\begin{align*}
p^\oplus&=p&(\neg\phi)^\oplus&=\phi^\ominus&(-\!\phi)^\oplus&=\invol(\phi^\ominus)\\
p^\ominus&=p^*&(\neg\phi)^\ominus&=\phi^\oplus&(-\!\phi)^\ominus&=\invol(\phi^\oplus)\\
(\phi\&\chi)^\oplus&=\phi^\oplus\!\wedge\!\chi^\oplus&(\phi\wedge\chi)^\ominus&=\phi^\ominus\!\vee\!\chi^\ominus&(\phi\sqcap\chi)^\ominus&=\phi^\ominus\!\wedge\!\chi^\ominus\\
(\phi\Rightarrow\chi)^\oplus&=\phi^\oplus\!\rightarrow\!\chi^\oplus&(\phi\rightarrow\chi)^\ominus&=\chi^\ominus\!\coimplies\!\phi^\ominus&(\phi\sqsupset\chi)^\ominus&=\phi^\ominus\!\rightarrow\!\chi^\ominus\\
(\xcancel{\Box}\phi)^\oplus&=\Box_1\phi^\oplus&(\Box\phi)^\ominus&=\lozenge_2\phi^\ominus&(\blacksquare\phi)^\ominus&=\Box_2\phi^\ominus\\
(\xcancel{\lozenge}\phi)^\oplus&=\lozenge_1\phi^\oplus&(\lozenge\phi)^\ominus&=\Box_2\phi^\ominus&(\blacklozenge\phi)^\ominus&=\lozenge_2\phi^\ominus
\end{align*}

Let $\phi\in\bimodalLinv(2)$ and $\heartsuit,\overline{\heartsuit}\in\{\Box,\lozenge\}$ with $\heartsuit\neq\overline{\heartsuit}$. Then $\phi^{\Join}$ is the result of replacement of $\heartsuit_1$'s with $\heartsuit$'s and $\heartsuit_2$'s with $\neg\overline{\heartsuit}\neg$'s.
\end{definition}
\begin{example}[How $^\oplus$, $^\ominus$, and $^{\Join}$ work]\label{example:+-translation}
Consider $\phi=\Box(p\rightarrow\blacklozenge\neg q)\vee-\!r$. Applying Definition~\ref{def:+-translation}, we obtain
\begin{align*}
\phi^\oplus&=\Box_1(p\rightarrow\lozenge_1 q^*)\vee\invol r^*&\phi^\ominus&=\lozenge_2(\lozenge_2q\coimplies p^*)\wedge\invol r
\end{align*}

Now take $\chi=\Box_1(p\wedge q)\rightarrow\lozenge_2p$. We obtain $\chi^{\Join}=\Box(p\wedge q)\rightarrow\neg\Box\neg p$.

One can see that $^\oplus$ and $^\ominus$ encode, respectively, support of truth and support of falsity of $\fullLbilattice$ formulas in $\bimodalLinv$ (cf.~Definitions~\ref{def:KinvG} and~\ref{def:KblG}). 
\end{example}
\begin{theorem}\label{theorem:embedding}
Let $\phi\in\bimodalLinv(2)$, $\chi\in\fullLbilattice$, and let $\Box_1$ and $\Box_2$ be associated with $R^+$ and $R^-$, respectively. Then, for every bi-relational pointed frame $\langle\mathfrak{F},w\rangle=\langle\langle W,R^+R^-\rangle,w\rangle$, it holds that
\begin{enumerate}[noitemsep,topsep=2pt]
\item $\mathfrak{F},w\models_{\KinvG}\phi$ iff $\mathfrak{F},w\models_{\KblG}\both\rightarrow\phi^{\Join}$;
\item $\mathfrak{F},w\models_{\KblG}\chi$ iff $\mathfrak{F},w\models_{\KinvG(2)}\chi^\oplus\!\wedge\!{\sim}(\chi^\ominus)$.
\end{enumerate}
\end{theorem}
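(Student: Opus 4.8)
The plan is to reduce both equivalences to a single \emph{translation lemma} proved by induction on formula complexity, recording how the two-valuation semantics of $\KblG$ is simulated coordinate-wise by the single-valuation semantics of $\KinvG(2)$ on the common bi-relational frame $\langle W,R^+,R^-\rangle$ (with $\Box_1,\lozenge_1$ read over $R^+$ and $\Box_2,\lozenge_2$ over $R^-$). Each direction of each item will then come from reading off the appropriate coordinate.

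For item~2 I would first establish: for every $\KblG$ model $\mathfrak{M}=\langle W,R^+,R^-,v_1,v_2\rangle$ and the $\KinvG(2)$ model on the same frame with $v(p,w)=v_1(p,w)$ and $v(p^*,w)=v_2(p,w)$, one has $v(\chi^\oplus,w)=v_1(\chi,w)$ and $v(\chi^\ominus,w)=v_2(\chi,w)$ for all $\chi\in\fullLbilattice$ and $w$. Each clause of Definition~\ref{def:+-translation} is designed to match a clause of Definition~\ref{def:KblG}: e.g.\ $(-\phi)^\oplus=\invol(\phi^\ominus)$ matches $v_1(-\phi)=1-v_2(\phi)$; $(\phi\rightarrow\chi)^\ominus=\chi^\ominus\coimplies\phi^\ominus$ matches the co-implication pattern of $v_2(\phi\rightarrow\chi)$; and the modal clauses match because $^\oplus$ uses $\Box_1,\lozenge_1$ (i.e.\ $R^+$) while $^\ominus$ routes every $\Box,\lozenge,\blacksquare,\blacklozenge$ through $\Box_2,\lozenge_2$ (i.e.\ $R^-$). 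Since $v_1,v_2$ are independent and $^\ominus$ uses the fresh variables $p^*$, this assignment is a bijection between $\KblG$ models and $\KinvG(2)$ valuations on $\Prop(\chi)\cup\{p^*:p\in\Prop(\chi)\}$. Item~2 then follows because $v(\chi^\oplus\wedge{\sim}(\chi^\ominus),w)=1$ iff $v(\chi^\oplus,w)=1$ and $v(\chi^\ominus,w)=0$ (by the G\"odel semantics of $\wedge$ and ${\sim}$), i.e.\ iff $v_1(\chi,w)=1$ and $v_2(\chi,w)=0$, which is exactly strong validity.

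For item~1 I would prove the companion lemma: for every $\KblG$ model and the $\KinvG(2)$ model with $v(p,w)=v_1(p,w)$, one has $v(\phi,w)=v_1(\phi^{\Join},w)$ for all $\phi\in\bimodalLinv(2)$. The propositional cases are immediate, since on the first coordinate $\invol,\wedge,\rightarrow$ act over $\phi^{\Join}$ exactly as in $\KinvG(2)$; in particular $v_1(\invol\theta)=v_1(\neg-\theta)=1-v_1(\theta)$. The decisive step, and the main obstacle, is the modal case for $\heartsuit_2$: unfolding $\Box_2^{\Join}=\neg\lozenge\neg$ and $\lozenge_2^{\Join}=\neg\Box\neg$, one must check that at the first coordinate the negations swap $v_1\leftrightarrow v_2$, so that the $R^-$-based $v_2$-clauses of $\lozenge$ and $\Box$ collapse to an $R^-$-box and an $R^-$-diamond on $v_1$, matching $v(\Box_2\psi)$ and $v(\lozenge_2\psi)$; concretely $v_1(\neg\lozenge\neg\psi^{\Join},w)=v_2(\lozenge\neg\psi^{\Join},w)=\inf_{w'}\{wR^-w'\rightarrow v_1(\psi^{\Join},w')\}$, and dually for $\lozenge_2$.

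Finally I would discharge the $\both\rightarrow$ wrapper by computing $v(\both\rightarrow\psi,w)=(v_1(\psi,w),0)$ for any $\psi$: the first coordinate is unchanged because $v_1(\both)=1$, and the second collapses to $0$ because $v_2(\both)=1$. Hence strong validity of $\both\rightarrow\phi^{\Join}$ reduces to $v_1(\phi^{\Join},w)=1$ for all models, which by the companion lemma equals $v(\phi,w)=1$ for all $\KinvG(2)$ valuations $v$ (the value of $v_2$ being irrelevant to $v_1(\phi^{\Join})$, and $v_2$ free in the reverse construction), yielding item~1 in both directions. I expect the bookkeeping for the co-implication in $^\ominus$ and the dual-modality unfolding in $^{\Join}$ to be the only genuinely delicate computations; everything else is a routine structural induction.
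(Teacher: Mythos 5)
Your proposal is correct and follows essentially the same route as the paper: both reduce item~1 to the first-coordinate claim $\mathfrak{F},w\models_{\KinvG}\phi$ iff $\mathfrak{F},w\models^+_{\KblG}\phi^{\Join}$ (using $v(\both,w)=(1,1)$ to discharge the wrapper), and both prove item~2 via the coordinate-wise translation lemma $v(\chi^\oplus,u)=v_1(\chi,u)$, $v(\chi^\ominus,u)=v_2(\chi,u)$ under the correspondence $v(p,u)=v_1(p,u)$, $v(p^*,u)=v_2(p,u)$, with the same key modal computation $v_1(\neg\lozenge\neg\psi^{\Join},u)=v_2(\lozenge\neg\psi^{\Join},u)=\inf_{u'}\{uR^-u'\rightarrow v_1(\psi^{\Join},u')\}$. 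The only cosmetic difference is that you package the two directions as a single bijection between models, where the paper spells out the forward and backward valuation constructions separately.
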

\begin{proof}
For 1., note that it suffices to check that $\mathfrak{F},w\models_{\KinvG}\phi$ iff $\mathfrak{F},w\models^+_{\KblG}\phi^{\Join}$ since $v(\both,w)=(1,1)$. The proof is a straightforward induction (cf.~Section~\ref{ssec:theorem:embedding1}).

To obtain 2., we show that the following statements hold (cf.~Section~\ref{ssec:theorem:embedding}).
\begin{align*}
(a)~\mathfrak{F},w\models_{\KblG}\chi^\oplus&\text{ iff }\mathfrak{F},w\models^+_{\KblG}\chi&(b)~\mathfrak{F},w\models_{\KblG}{\sim}(\chi^\ominus)\text{ iff }\mathfrak{F},w\models^-_{\KblG}\chi
\end{align*}
The result follows since $\chi$ is strongly valid on $\langle\mathfrak{F},w\rangle$ iff it is both $v_1$- and $v_2$-valid.
\end{proof}
\section{Alternative semantics\label{sec:Fmodels}}
Now, let us present alternative semantics for $\KinvG$ and $\KblG$. Note that in~\cite{CaicedoMetcalfeRodriguezRogger2017}, a unified procedure is given to obtain semantics with the finite model property for every logic whose connectives are order-based, i.e., have first-order definitions via the lattice join and meet. However, $\invol$ is not order-based, so we cannot apply the results of~\cite{CaicedoMetcalfeRodriguezRogger2017}. Instead, we adapt the less general construction from~\cite{CaicedoMetcalfeRodriguezRogger2013}.
\begin{definition}[$\Fmsf$-models for $\KinvG$]\label{def:F-KinvG}~

\noindent
A \emph{$\KinvG$ $\Fmsf$-model} is a tuple $\mathfrak{M}=\langle W,R,T,v\rangle$ with $\langle W,R\rangle$ being a~frame and $T:W\rightarrow\mathcal{P}_{<\omega}([0,1])$ be s.t.\ $\{0,1\}\subseteq T(w)$ for all $w\in W$. The valuation is extended to the complex formulas as in $\KinvG$ (Definition~\ref{def:KinvG}) in the cases of propositional connectives, and in the modal cases, as follows.
\begin{align*}
v(\Box\phi,w)&=\max\{x\in T(w)\mid x\leq\inf\limits_{w'\in W}\{wRw'\rightarrow v(\phi,w')\}\}\\
v(\lozenge\phi,w)&=\min\{x\in T(w)\mid x\geq\sup\limits_{w'\in W}\{wRw'\wedge v(\phi,w')\}\}
\end{align*}
\end{definition}
\begin{definition}[$\Fmsf$-models for $\KblG$]\label{def:F-KblG}~

\noindent
A \emph{$\KblG$ $\Fmsf$-model} is a tuple $\mathfrak{M}=\langle W,R^+,R^-,T_1,T_2,v_1,v_2\rangle$ with $\langle W,R^+,R^-\rangle$ being a~bi-relational frame and $T_1,T_2:W\rightarrow\mathcal{P}_{<\omega}([0,1])$ be s.t.\ $\{0,1\}\subseteq T^\pm(w)$ for all $w\in W$. The valuations are extended to the complex formulas as in $\KblG$ (Definition~\ref{def:KblG}) for propositional connectives, and as follows for modalities.
\begin{align*}
v_1(\xcancel{\Box}\phi,w)&=\max\{x\in T_1(w)\mid x\leq\inf\limits_{w'\in W}\{wR^+w'\rightarrow v_1(\phi,w')\}\}\\
v_1(\xcancel{\lozenge}\phi,w)&=\min\{x\in T_1(w)\mid x\geq\sup\limits_{w'\in W}\{wR^+w'\wedge v_1(\phi,w')\}\}\\
v_2(\Box\phi,w)&=\min\{x\in T_2(w)\mid x\geq\sup\limits_{w'\in W}\{wR^-w'\wedge v_2(\phi,w')\}\}\\
v_2(\lozenge\phi,w)&=\max\{x\in T_2(w)\mid x\leq\inf\limits_{w'\in W}\{wR^-w'\rightarrow v_2(\phi,w')\}\}\\
v_2(\blacksquare\phi,w)&=\max\{x\in T_2(w)\mid x\leq\inf\limits_{w'\in W}\{wR^-w'\rightarrow v_2(\phi,w')\}\}\\
v_2(\blacklozenge\phi,w)&=\min\{x\in T_2(w)\mid x\geq\sup\limits_{w'\in W}\{wR^-w'\wedge v_2(\phi,w')\}\}
\end{align*}
\end{definition}

We will use $\FKinvG$ and $\FKblG$ to denote the sets of, respectively, $\bimodalLinv$ and $\fullLbilattice$ formulas valid in all $\Fmsf$-models. Let us recall the definition of generated submodels (for brevity, we define generated $\KinvG$ submodels; the definition of generated $\KblG$ submodels can be produced similarly).
\begin{definition}\label{def:generatedmodel}~
\begin{itemize}[noitemsep,topsep=2pt]
\item A frame $\widehat{\mathfrak{F}}=\langle\widehat{W},\widehat{R}\rangle$ is \emph{a~subframe} of $\mathfrak{F}=\langle W,R\rangle$ if $\widehat{W}\subseteq W$ and $\widehat{R}$ is a~restriction of $R$ on $\widehat{W}$.
\item $\widehat{\mathfrak{M}}=\langle\widehat{F},\widehat{T},\widehat{v}\rangle$ is \emph{a~submodel of} $\mathfrak{M}=\langle\mathfrak{F},T,v\rangle$ if $\widehat{\mathfrak{F}}$ is a~subframe of $\mathfrak{F}$ and $\widehat{v}$ and $\widehat{T}$ are restrictions of $v$ and $T$ on $\widehat{W}$.
\item $\widehat{\mathfrak{M}}$ is \emph{generated by $X\subseteq W$} iff it is the smallest submodel containing $X$ s.t.\ if $w\in\widehat{W}$ and $w'\in R(w)$, then $w'\in\widehat{W}$, as well.
\end{itemize}
\end{definition}
\begin{convention}
A model $\mathfrak{M}$ is \emph{tree-like} if its underlying frame is a~tree. We use $\mathcal{h}(\mathfrak{M})$ to denote its \emph{height}.
\end{convention}

In the remainder of the section, we show that these semantics are equivalent to those in Definitions~\ref{def:KinvG} and~\ref{def:KblG}. The proof is the same as that of~\cite[Theorem~1]{CaicedoMetcalfeRodriguezRogger2013} but we need to alter the formulation of the following statement.
\begin{proposition}\label{prop:lemma1}
Let $\mathbf{L}\!\in\!\{\KinvG,\FKinvG\}$, $\mathfrak{M}$ be an $\mathbf{L}$-model, and $\phi\in\bimodalLinv$.
\begin{enumerate}[noitemsep,topsep=2pt]
\item[$(a)$] If $\langle\widehat{\mathfrak{F}},\!\widehat{v}\rangle$ is a generated submodel of $\langle\mathfrak{F},\!v\rangle$, then $\widehat{v}(\phi,w)\!=\!v(\phi,w)$ for all $w\!\in\!\widehat{\mathfrak{F}}$.
\item[$(b)$] For all $w\!\in\!\mathfrak{M}$, there is a tree-like $\mathbf{L}$-submodel $\widehat{\mathfrak{M}}$ of $\mathfrak{M}$ generated by $w$ s.t.\ $v(\phi,\!w)\!=\!\widehat{v}(\phi,\!w)$ ($v_1(\phi,\!w)\!=\!\widehat{v_1}(\phi,\!w)$ and $v_2(\phi,\!w)\!=\!\widehat{v_2}(\phi,\!w)$) and $\mathcal{h}(\widehat{\mathfrak{M}})\!\leq\!|\phi|$.
\item[$(c)$]  Let $g:[0,1]\rightarrow[0,1]$ be s.t.\ $g(0)=0$, $g(1)=1$, \underline{$1-g(x)=g(1-x)$}, and $x\leq x'$ iff $g(x)\leq g(x')$. Let further $\mathfrak{N}$ be s.t.\ $W_\mathfrak{M}=W_\mathfrak{N}$, $wR_\mathfrak{N}w'=g(wR_\mathfrak{M}w')$, $T_\mathfrak{N}(w)=g(T_\mathfrak{M}(w))$, and $v_\mathfrak{N}(p,w)=g(v_\mathfrak{M}(p,w))$ for all $w,w'\in\mathfrak{M}$ and $p\in\Prop$. Then $v_\mathfrak{N}(\phi,w)=g(v_\mathfrak{M}(\phi,w))$ for each $\phi\in\bimodalLinv$.
\end{enumerate}
\end{proposition}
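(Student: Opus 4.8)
The plan is to prove all three items by induction on the structure of $\phi$, handling $\KinvG$ and $\FKinvG$ (and the analogous $\KblG$/$\FKblG$ statements) uniformly; the only case that is genuinely new relative to the corresponding statement in~\cite{CaicedoMetcalfeRodriguezRogger2013} is the involution $\invol$, while the G\"{o}del connectives and the modalities are treated exactly as there, additionally carrying the finite threshold sets $T(w)$ (resp.\ $T_1(w),T_2(w)$) through every step.

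For $(a)$ I would show by induction that $v(\phi,w)$ is determined by the worlds $R$-reachable from $w$. The propositional cases $p$, $\invol\psi$, $\psi\wedge\chi$, $\psi\rightarrow\chi$ are immediate, since the value at $w$ is computed from the values of the immediate subformulas at $w$, and $\widehat{v},\widehat{T}$ agree with $v,T$ on all of $\widehat{W}$. For $\Box\psi$ and $\lozenge\psi$ the value at $w$ depends only on the numbers $wRw'$ and $v(\psi,w')$ with $w'\in R(w)$; since $\widehat{\mathfrak{M}}$ is generated we have $R(w)\subseteq\widehat{W}$, so the induction hypothesis applies, and in the $\Fmsf$-case $\widehat{T}(w)=T(w)$ leaves the thresholded $\max/\min$ unchanged.

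For $(b)$ I would first isolate the locality sublemma that $v(\phi,w)$ depends only on worlds at $R$-distance at most the modal depth $\mathsf{md}(\phi)\leq|\phi|$, proved by the same induction as $(a)$ while tracking depth. I would then unravel $\mathfrak{M}$ into a tree rooted at $w$, copying $R$, $v$ and $T$ along finite $R$-paths, and truncate at height $\mathsf{md}(\phi)$. The unraveling projection preserves $R$-values, $v$ and $T$ pointwise, hence preserves the value of any formula whose modal depth does not exceed the remaining distance to the leaves; together with the sublemma this gives $\widehat{v}(\phi,w)=v(\phi,w)$ and $\mathcal{h}(\widehat{\mathfrak{M}})\leq|\phi|$. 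The $\KblG$ statement follows by running the same construction on $R^+,R^-,T_1,T_2,v_1,v_2$ simultaneously.

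For $(c)$, the item whose formulation had to be altered, I would first record the properties of $g$ used in the induction: in the intended application $g$ is an order-automorphism of $[0,1]$ fixing $0$ and $1$, hence continuous, so it commutes with $\min$, with the G\"{o}del implication (using $g(1)=1$), and with the infima and suprema occurring in the modal clauses, and it maps $T_\mathfrak{M}(w)$ order-isomorphically onto $T_\mathfrak{N}(w)=g(T_\mathfrak{M}(w))$. The variable case holds by hypothesis, and $\wedge,\rightarrow,\Box,\lozenge$ go through because $g$ commutes with the operations just listed (in the $\Fmsf$-case the thresholded $\max/\min$ commutes with $g$, since selecting the largest element of $T_\mathfrak{M}(w)$ below a bound and then applying $g$ equals selecting the largest element of $g(T_\mathfrak{M}(w))$ below the $g$-image of that bound). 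The decisive case is $\invol\psi$: by the induction hypothesis $v_\mathfrak{N}(\invol\psi,w)=1-v_\mathfrak{N}(\psi,w)=1-g(v_\mathfrak{M}(\psi,w))$, whereas $g(v_\mathfrak{M}(\invol\psi,w))=g(1-v_\mathfrak{M}(\psi,w))$, and these agree precisely by the underlined requirement $1-g(x)=g(1-x)$; this is exactly why the hypothesis on $g$ from~\cite{CaicedoMetcalfeRodriguezRogger2013} must be strengthened with the involution-compatibility condition. The main obstacle is confined to this item and is twofold: checking that $g$ really commutes with the infima and suprema of the modal clauses (which rests on $g$ being a continuous order-automorphism rather than merely order-preserving), and the $\invol$ case, whose success hinges entirely on the added symmetry $g(1-x)=1-g(x)$. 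Items $(a)$ and $(b)$ are by contrast routine locality and unraveling arguments, identical in spirit to the plain G\"{o}del modal case.
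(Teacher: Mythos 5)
Your proposal is correct and takes essentially the same route as the paper, which omits the proof as ``standard'' (the induction of~\cite{CaicedoMetcalfeRodriguezRogger2013}) and remarks only that the underlined condition $1-g(x)=g(1-x)$ is exactly what saves the $\invol$ case of $(c)$ --- the very point you isolate as the crux. The one caveat, which you yourself flag, is that the stated hypotheses make $g$ an order-embedding but not necessarily a surjection, so commuting $g$ with the infinite infima and suprema in the modal clauses really does require the sup/inf-preservation (equivalently, continuity) you invoke; that assumption is explicit in the original lemma but is left implicit in the proposition as formulated here.
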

The proof is standard, whence, we omit it here. Note, however, that because of $\invol$, we need the underlined part of $(c)$ that was not present in the original statement. Otherwise, we could have $g(\frac{1}{2})=\frac{2}{3}$ which would fail $(c)$ since $v_\mathfrak{M}(p\leftrightarrow\invol p,w)=1$ if $v_\mathfrak{M}(p,w)=\frac{1}{2}$ but $v_\mathfrak{N}(p\leftrightarrow\invol p,w)=\frac{1}{3}$ since $v_\mathfrak{N}(p,w)=\frac{2}{3}$.

The next lemma combines Lemmas~2 and~3 from~\cite{CaicedoMetcalfeRodriguezRogger2013}. Again, the proof is the same as in~\cite{CaicedoMetcalfeRodriguezRogger2013} and we omit it.
\begin{proposition}\label{prop:lemma2-3}~
\begin{enumerate}[noitemsep,topsep=2pt]
\item For any tree-like $\FKinvG$-model $\mathfrak{M}$ of finite height with root $w$ there is a tree-like $\KinvG$-model $\mathfrak{M}'$ with root $w'$ s.t.\ $v_\mathfrak{M}(\phi,w)=v_{\mathfrak{M}'}(\phi,w')$ for each $\phi\in\bimodalLinv$. In addition, if $\mathfrak{M}$ is crisp, $\mathfrak{M}'$ is crisp too.
\item Let $\Sigma\subseteq\bimodalLinv$ contain $\mathbf{1}$ and $\mathbf{0}$, be finite and closed under taking subformulas and let $\mathfrak{M}=\langle\mathfrak{F},v\rangle$ be a~tree-like $\KinvG$-model of finite height with root $w$. Then there is a~tree-like $\FKinvG$-model $\widehat{\mathfrak{M}}=\langle\widehat{F},\widehat{T},\widehat{v}\rangle$ with root $w$ s.t.\ $\widehat{\mathfrak{F}}\subseteq\mathfrak{F}$, $|W|\leq|\Sigma|^{\mathcal{h}(\mathfrak{M})}$, $|\widehat{T}(w')|\leq|\Sigma|$, and $v(\phi,w)=\widehat{v}(\phi,w)$ for every $\phi\in\Sigma$ and $w'\in\widehat{\mathfrak{M}}$. In addition, if $\mathfrak{M}$ is crisp, $\widehat{\mathfrak{M}}$ is crisp too.
\end{enumerate}
\end{proposition}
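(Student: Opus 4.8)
The plan is to prove the two items separately, each by induction on the height $\mathcal{h}(\mathfrak{M})$, and in both cases to isolate a single modal layer at the root: every propositional connective (including $\invol$) is evaluated by identical clauses in the standard and the $\Fmsf$-semantics (Definitions~\ref{def:KinvG} and~\ref{def:F-KinvG}), so once the values of the modal subformulas at the root are shown to agree, an inner induction on the structure of $\phi$ transfers the agreement to all of $\bimodalLinv$ (resp.\ all of $\Sigma$). Thus the whole content lies in matching $v(\Box\psi,w)$ and $v(\lozenge\psi,w)$ across the two semantics, given that the maximal proper subtrees have already been handled by the induction hypothesis.

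For item~2 I would set $\widehat{T}(w)\coloneqq\{v(\psi,w):\psi\in\Sigma\}\cup\{0,1\}$, which is finite with $|\widehat{T}(w)|\leq|\Sigma|$, and keep $\widehat{v}$ equal to $v$ on $\Prop$. The point is that whenever $\Box\psi\in\Sigma$ (resp.\ $\lozenge\psi\in\Sigma$), the true value $v(\Box\psi,w)$ already belongs to $\widehat{T}(w)$, so rounding the genuine infimum (resp.\ supremum) into $\widehat{T}(w)$ as the $\Fmsf$-clause prescribes returns exactly that value. To bound the branching I prune the successors: for each modal formula in $\Sigma$ I retain one successor $w'$ witnessing the relevant infimum/supremum up to the gap between consecutive elements of $\widehat{T}(w)$ — i.e.\ forcing $wRw'\rightarrow v(\psi,w')$ below, or $wRw'\wedge v(\psi,w')$ above, the $\widehat{T}(w)$-neighbour of the target — discarding the rest. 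This leaves at most $|\Sigma|$ successors per node, hence $|\widehat{W}|\leq|\Sigma|^{\mathcal{h}(\mathfrak{M})}$, and the inner induction gives $\widehat{v}(\phi,w)=v(\phi,w)$ on $\Sigma$. Crispness survives because neither the re-rounding nor the pruning alters accessibility degrees.

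For item~1 the induction hypothesis supplies, for each successor subtree, a standard $\KinvG$-submodel reproducing all its formula values, so it remains to arrange that the \emph{genuine} infimum and supremum at the root collapse onto the designated elements of $T(w)$ dictated by the $\Fmsf$-clauses. Since the $\Fmsf$-value of $\Box\psi$ is by construction $\leq$ the true infimum over the converted successors (dually for $\lozenge$), the task is to attach fresh witness successors pulling each box down, and pushing each diamond up, to its target $t\in T(w)$. A witness realising $wRw'\rightarrow v(\psi,w')=t$ needs a world where $\psi$ takes the value $t$; I would manufacture such a world from an already-constructed subtree by the rescaling of Proposition~\ref{prop:lemma1}$(c)$, choosing an order-automorphism $g$ fixing $0,1$ with $g(s)=t$, where $s$ is the value $\psi$ takes there. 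The underlined symmetric condition $1-g(x)=g(1-x)$ is exactly what keeps the $\invol$-subformulas of $\psi$ correct under the rescaling, and it forces $g(\tfrac{1}{2})=\tfrac{1}{2}$, so the rescaling can move a value $s$ only to a target $t$ lying on the same side of $\tfrac{1}{2}$; one checks that the targets arising as $\Fmsf$-values always admit such a source.

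The main obstacle is the \emph{simultaneity} of this realisation: a single added successor contributes to every box and every diamond at once, so I must verify that a witness inserted to reach one target does not overshoot another — pushing some $\lozenge\chi$ too high while lowering $\Box\psi$, say. I would organise the insertions so that box-witnesses are attached with degree pinned above their target (making their diamond-contribution harmless) and diamond-witnesses symmetrically, and then argue from the finite order structure of $T(w)$ that the finitely many constraints are jointly satisfiable; here the involution is the delicate ingredient, since $\invol$ ties the value of each $\Box\psi$ to that of its complement, and it is precisely the symmetry of $g$ that keeps the two sides compatible. Crispness is inherited throughout because all witness degrees can be taken in $\{0,1\}$ when the original frame is crisp, and the construction terminates since the height is finite.
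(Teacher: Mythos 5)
The paper does not actually spell out a proof of this proposition: it states that ``the proof is the same as in~\cite{CaicedoMetcalfeRodriguezRogger2013}'' and omits it, so your reconstruction has to be judged against that intended CMRR-style argument. Your treatment of item~2 is fine and is exactly that argument: putting $\{v(\psi,w):\psi\in\Sigma\}\cup\{0,1\}$ into $\widehat{T}(w)$ makes the rounding idle on $\Sigma$, and keeping one witness successor per modal formula of $\Sigma$ (chosen so that the pruned infimum/supremum stays strictly within the $\widehat{T}(w)$-gap around the target) gives the $|\Sigma|^{\mathcal{h}(\mathfrak{M})}$ bound; crispness is clearly preserved.

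Item~1 is where the proof breaks, and it breaks exactly at the step you defer with ``one checks that the targets arising as $\Fmsf$-values always admit such a source.'' That claim is false, and in fact item~1 fails as stated for arbitrary $\Fmsf$-models, so no completion of your construction (nor a verbatim transfer of the CMRR argument) can work. Take $W=\{w,w'\}$, $wRw'=1$, $T(w)=T(w')=\{0,1\}$, $v(p,w')=\frac{3}{4}$. Then $v(p\leftrightarrow\invol p,w')=\frac{1}{4}$, so $v(\lozenge(p\leftrightarrow\invol p),w)=\min\{x\in\{0,1\}:x\geq\frac{1}{4}\}=1$, while $v(\triangle(p\rightarrow\invol p),w')=0$ (as $v(p\rightarrow\invol p,w')=\frac{1}{4}\neq1$), so $v(\lozenge\triangle(p\rightarrow\invol p),w)=0$. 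In any standard $\KinvG$-model, $v(p\leftrightarrow\invol p,u)=1$ iff $v(p,u)=\frac{1}{2}$ and is otherwise $\min(v(p,u),1-v(p,u))<\frac{1}{2}$; hence $v(\lozenge(p\leftrightarrow\invol p),w)=1$ forces successors $u$ with $v(p,u)=\frac{1}{2}$ and $wRu$ arbitrarily close to~$1$, and every such $u$ has $v(\triangle(p\rightarrow\invol p),u)=1$, forcing $v(\lozenge\triangle(p\rightarrow\invol p),w)=1$. So no standard model agrees with this $\Fmsf$-model on both formulas (both are in $\bimodalLinv$, since $\leftrightarrow$ and $\triangle$ are defined connectives). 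This is precisely the phenomenon you flagged: a symmetric order-automorphism fixes $\frac{1}{2}$ and cannot drag the value $\frac{1}{4}$ up across $\frac{1}{2}$ towards the rounded-up target $1$, and here the obstruction is not an artefact of the rescaling method but of the statement itself. Any correct version of item~1 must either restrict the class of $\Fmsf$-models (e.g.\ to those arising from complete tableau branches, where $T(w)$ and the rounded values satisfy additional coherence conditions) or restrict the set of formulas on which agreement is claimed; your secondary worry about the simultaneity of the witness insertions is real but subordinate to this.
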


Using Propositions~\ref{prop:lemma1} and~\ref{prop:lemma2-3}, it is immediate to obtain that $\KinvG$ and $\FKinvG$ coincide. Similarly, one can reformulate Propositions~\ref{prop:lemma1} and~\ref{prop:lemma2-3} for $\KblG$ and $\FKblG$ (but now $\{\mathbf{0},\mathbf{1},\both,\neither\}\in\Sigma$) and get that $\FKblG$ and $\KblG$ also coincide.
\section{Tableaux\label{sec:TKinvG}}
In Section~\ref{sec:Fmodels}, we used the result of~\cite{CaicedoMetcalfeRodriguezRogger2013} to obtain that $\KinvG$ and $\KblG$ have the finite model property w.r.t.\ $\Fmsf$-models. This, however, does not give a~decision algorithm. Thus, we will construct a tableaux calculus that allows us to extract countermodels from open branches and use it to define a~decision procedure that takes polynomial space. In~\cite{Rogger2016phd}, tableaux for $\KG$ are presented but they use the fact that all $\bimodalL$ connectives are order-based which is not the case for $\bimodalLinv$. To incorporate $\invol$, we follow~\cite{Haehnle1994} and provide a~\emph{constraint tableaux calculus}. Moreover, we follow the idea of~\cite{DAgostino1990} and use independent constraints for $v_1$ and~$v_2$. Thus, we will be able to use one calculus for both $\KblG$ and $\KinvG$.
\begin{definition}[Constraints]\label{def:structure}
We fix a~countable sets $\Wmsf=\{w,w',w_0,\ldots\}$ of \emph{state-labels} and $\Var=\{c,d,e,c',\ldots\}$ of variables ranging over $[0,1]$ and define $\Tmsf=\{t^i_j(w)\!:\!w\!\in\!\Wmsf,i\!\in\!\mathbb{N},j\in\{0,1\}\}\cup\{0,1\}$. A~\emph{value term} $t$ is either a member of $\Var$ or $\Tmsf$, or a~construction $1-t'$ with $t'$ being a~term.

Now let $\mathbf{i}\in\{\mathbf{1},\mathbf{2}\}$, $\Smsf\in\{\Rmsf^+,\Rmsf^-\}$, $w,w'\in\Wmsf$, $t$ and $t'$ be terms, $t(w)\in\Tmsf$, and $\triangledown\in\{\leqslant,<,\geqslant,>\}$. A~\emph{constraint} has one of the following forms:
\begin{align*}
w:\mathbf{i}:\phi\triangledown t&&w:\mathbf{i}:\phi=t(w)&&w\Smsf w'\triangledown w:\mathbf{i}:\phi&&t\triangledown t'&&w\Smsf w'\triangledown t
\end{align*}
Constraint $\mathfrak{X}\triangledown\mathfrak{X}'$ consists of two \emph{structures} (we denote the set of structures with~$\Str$).
\end{definition}

\begin{definition}[$\TKblG$ --- tableaux for $\KblG$]\label{def:TKblG}
A \emph{tableau} is a downward-branching tree whose branches are sets of constraints. Each branch can be extended by a rule below. For convenience, we also include the rules for~$\invol$.

Bars denote branching; $\mathbf{i},\mathbf{j}\!\in\!\{\mathbf{1},\mathbf{2}\}$, $\mathbf{i}\!\neq\!\mathbf{j}$; $\blacktriangledown,\triangledown\!\in\!\{\leqslant,<,\geqslant,>\}$, if $\triangledown$ is $\leqslant$, then $\blacktriangledown$ is $\geqslant$ (likewise for $<$); $\triangleright\!\in\!\{\geqslant,>\}$; $\mathfrak{Y}\!\approx\!\mathfrak{X}$ stands for ‘$\mathfrak{Y}\!\leqslant\!\mathfrak{X}$ and  $\mathfrak{Y}\!\geqslant\!\mathfrak{X}$’; $c$, $w'$, $t_0(w)$, and $t_1(w)$ are fresh on the branch; $w\Rmsf^+\!u$ and $w\Rmsf^-\!u$ occur on the branch.
\begin{center}
\scriptsize{\begin{align*}
\&_\triangleright:\dfrac{w\!:\!\mathbf{1}\!:\!\phi\&\chi\triangleright\mathfrak{X}}{\begin{matrix}w\!:\!\mathbf{1}\!:\!\phi\triangleright\mathfrak{X}\\w\!:\!\mathbf{1}\!:\!\chi\triangleright\mathfrak{X}\end{matrix}}
&&
\sqcap^2_\triangleright:\dfrac{w\!:\!\mathbf{2}\!:\!\phi\!\sqcap\!\chi\triangleright\mathfrak{X}}{\begin{matrix}w\!:\!\mathbf{2}\!:\!\phi\!\triangleright\!\mathfrak{X}\\w\!:\!\mathbf{2}\!:\!\chi\!\triangleright\!\mathfrak{X}\end{matrix}}
&&
\wedge^2_\triangleright:\dfrac{w\!:\!\mathbf{2}\!:\!\phi\!\wedge\!\chi\triangleright\mathfrak{X}}{w\!:\!\mathbf{2}\!:\!\phi\!\triangleright\!\mathfrak{X}\mid w\!:\!\mathbf{2}\!:\!\chi\!\triangleright\!\mathfrak{X}}\\[.5em]
\&_\triangleleft:\dfrac{w\!:\!\mathbf{1}\!:\!\phi\&\chi\triangleleft\mathfrak{X}}{w\!:\!\mathbf{1}\!:\!\phi\!\triangleleft\!\mathfrak{X}\mid w\!:\!\mathbf{1}\!:\!\chi\!\triangleleft\!\mathfrak{X}}
&&
\wedge^2_\triangleleft:\dfrac{w\!:\!\mathbf{2}\!:\!\phi\!\wedge\!\chi\triangleleft\mathfrak{X}}{\begin{matrix}w\!:\!\mathbf{2}\!:\!\phi\!\triangleleft\!\mathfrak{X}\\w\!:\!\mathbf{2}\!:\!\chi\!\triangleleft\!\mathfrak{X}\end{matrix}}
&&
\sqcap^2_\triangleleft:\dfrac{w\!:\!\mathbf{2}\!:\!\phi\!\sqcap\!\chi\triangleleft\mathfrak{X}}{w\!:\!\mathbf{2}\!:\!\phi\!\triangleleft\!\mathfrak{X}\mid w\!:\!\mathbf{2}\!:\!\chi\!\triangleleft\!\mathfrak{X}}\\[.5em]
\Rightarrow_\triangleright:\dfrac{w\!:\!\mathbf{1}\!:\!\phi\!\Rightarrow\!\chi\triangleright\mathfrak{X}}{w\!:\!\mathbf{1}\!:\!\chi\!\triangleright\!\mathfrak{X}\left|\begin{matrix}\mathfrak{X}\!\triangleleft\!1\\w\!:\!\mathbf{1}\!:\!\chi\!\approx\!c\\w\!:\!\mathbf{1}\!:\!\phi\!\leqslant\!c\end{matrix}\right.}
&&
\rightarrow^2_>:\dfrac{w\!:\!\mathbf{2}\!:\!\phi\!\rightarrow\!\chi\!>\!\mathfrak{X}}{\begin{matrix}w\!:\!\mathbf{2}\!:\!\chi\!>\!\mathfrak{X}\\w\!:\!\mathbf{2}\!:\!\phi\!\approx\!c\\w\!:\!\mathbf{2}\!:\!\chi\!>\!c\end{matrix}}&&
\rightarrow^2_\geqslant:\dfrac{w\!:\!\mathbf{2}\!:\!\phi\!\rightarrow\!\chi\!\geqslant\!\mathfrak{X}}{\mathfrak{X}\!\leqslant\!0\left|\begin{matrix}w\!:\!\mathbf{2}\!:\!\chi\!\geqslant\!\mathfrak{X}\\w\!:\!\mathbf{2}\!:\!\phi\!\approx\!c\\w\!:\!\mathbf{2}\!:\!\chi\!>\!c\end{matrix}\right.}\\[.5em]
\sqsupset^2_\triangleright:\dfrac{w\!:\!\mathbf{2}\!:\!\phi\!\sqsupset\!\chi\triangleright\mathfrak{X}}{w\!:\!\mathbf{2}\!:\!\chi\!\triangleright\!\mathfrak{X}\left|\begin{matrix}\mathfrak{X}\!\triangleleft\!1\\w\!:\!\mathbf{2}\!:\!\chi\!\approx\!c\\w\!:\!\mathbf{1}\!:\!\phi\!\leqslant\!c\end{matrix}\right.}
&&
\Rightarrow_\triangleleft:\dfrac{w\!:\!\mathbf{1}\!:\!\phi\!\Rightarrow\!\chi\!\triangleleft\!\mathfrak{X}}{1\!\triangleleft\!\mathfrak{X}\left|\begin{matrix}w\!:\!\mathbf{1}\!:\!\chi\!\triangleleft\!\mathfrak{X}\\w\!:\!\mathbf{1}\!:\!\phi\!\approx\!c\\w\!:\!\mathbf{1}\!:\!\chi\!<\!c\end{matrix}\right.}
&&
\rightarrow^2_\triangleleft:\dfrac{w\!:\!\mathbf{2}\!:\!\phi\!\rightarrow\!\chi\!\triangleleft\!\mathfrak{X}}{w\!:\!\mathbf{2}\!:\!\chi\!\triangleleft\!\mathfrak{X}\left|\begin{matrix}0\!\triangleleft\!\mathfrak{X}\\w\!:\!\mathbf{2}\!:\!\phi\!\geqslant\!c\\w\!:\!\mathbf{2}\!:\!\chi\!\approx\!c\end{matrix}\right.}
\end{align*}}
\scriptsize{
\begin{align*}
\sqsupset^2_\triangleleft:\dfrac{w\!:\!\mathbf{2}\!:\!\phi\!\sqsupset\!\chi\!\triangleleft\!\mathfrak{X}}{1\!\triangleleft\!\mathfrak{X}\left|\begin{matrix}w\!:\!\mathbf{2}\!:\!\chi\!\triangleleft\!\mathfrak{X}\\w\!:\!\mathbf{2}\!:\!\phi\!\approx\!c\\w\!:\!\mathbf{2}\!:\!\chi\!<\!c\end{matrix}\right.}
&&
\neg:\dfrac{w\!:\!\mathbf{i}\!:\!\neg\phi\triangledown\mathfrak{X}}{w\!:\!\mathbf{j}\!:\!\phi\triangledown\mathfrak{X}}&&-:\dfrac{w\!:\!\mathbf{i}\!:\!-\!\phi\triangledown\mathfrak{X}}{w\!:\!\mathbf{j}\!:\!\phi\blacktriangledown1\!-\!\mathfrak{X}}&&\invol:\dfrac{w\!:\!\mathbf{i}\!:\!\invol\phi\triangledown\mathfrak{X}}{w\!:\!\mathbf{i}\!:\!\phi\blacktriangledown1\!-\!\mathfrak{X}}\
\end{align*}}
\scriptsize{\begin{align*}
\xcancel{\Box}_\triangleright\!\dfrac{w\!:\!\mathbf{1}\!:\!\xcancel{\Box}\phi\!\triangleright\!\mathfrak{X}}{\begin{matrix}w\!:\!\mathbf{1}\!:\!\xcancel{\Box}\phi\!=\!1\\1\!\triangleright\!\mathfrak{X}\end{matrix}\left|\begin{matrix}w\!:\!\mathbf{1}\!:\!\xcancel{\Box}\phi\!=\!t(w)\\\mathfrak{X}\!\triangleleft\!t(w)\\w'\!:\!\mathbf{1}\!:\!\phi\!<\!w\Rmsf^+\!w'\\w'\!:\!\mathbf{1}\!:\!\phi\!<\!t_1(w)\end{matrix}\right.}
&&
\Box^2_\triangleright\!\dfrac{w\!:\!\mathbf{2}\!:\!\Box\phi\!\triangleright\!\mathfrak{X}}{\mathfrak{X}\!\triangleleft\!0\left|\begin{matrix}\mathfrak{X}\!\triangleleft\!t_1(w)\\t(w)\!<\!w\Rmsf^+\!w'\\w\!:\!\mathbf{2}\!:\!\phi\!>\!t(w)\end{matrix}\right.}
&&
\blacksquare^2_\triangleright\!\dfrac{w\!:\!\mathbf{2}\!:\!\blacksquare\phi\!\triangleright\!\mathfrak{X}}{\begin{matrix}w\!:\!\mathbf{2}\!:\!\blacksquare\phi\!=\!1\\1\!\triangleright\!\mathfrak{X}\end{matrix}\left|\begin{matrix}w\!:\!\mathbf{2}\!:\!\blacksquare\phi\!=\!t(w)\\\mathfrak{X}\!\triangleleft\!t(w)\\w'\!:\!\mathbf{2}\!:\!\phi\!<\!w\Rmsf^-\!w'\\w'\!:\!\mathbf{2}\!:\!\phi\!<\!t_1(w)\end{matrix}\right.}
\end{align*}}
\scriptsize{\begin{align*}
\xcancel{\Box}_\triangleleft\!\dfrac{w\!:\!\mathbf{1}\!:\!\xcancel{\Box}\phi\!\triangleleft\!\mathfrak{X}}{\mathfrak{X}\!\triangleright\!1\left|\begin{matrix}\mathfrak{X}\!\triangleleft\!t(w)\\w'\!:\!\mathbf{1}\!:\!\phi\!<\!w\Rmsf^+w'\\w'\!:\!\mathbf{1}\!:\!\phi\!<\!t_1(w)\end{matrix}\right.}
&&
\blacksquare^2_\triangleleft\!\dfrac{w\!:\!\mathbf{2}\!:\!\blacksquare\phi\!\triangleleft\!\mathfrak{X}}{\mathfrak{X}\!\triangleright\!1\left|\begin{matrix}\mathfrak{X}\!\triangleleft\!t(w)\\w'\!:\!\mathbf{2}\!:\!\phi\!<\!w\Rmsf^-w'\\w'\!:\!\mathbf{2}\!:\!\phi\!<\!t_1(w)\end{matrix}\right.}
&&
\lozenge^2_\triangleleft\!\dfrac{w\!:\!\mathbf{2}\!:\!\lozenge\phi\!\triangleleft\!\mathfrak{X}}{\mathfrak{X}\!\triangleright\!1\left|\begin{matrix}\mathfrak{X}\!\triangleleft\!t(w)\\w'\!:\!\mathbf{2}\!:\!\phi\!<\!w\Rmsf^-w'\\w'\!:\!\mathbf{2}\!:\!\phi\!<\!t_1(w)\end{matrix}\right.}
\end{align*}}
\scriptsize{
\begin{align*}
\xcancel{\lozenge}_\triangleright\!\dfrac{w\!:\!\mathbf{1}\!:\!\xcancel{\lozenge}\phi\!\triangleright\!\mathfrak{X}}{\mathfrak{X}\!\triangleleft\!0\left|\begin{matrix}\mathfrak{X}\!\triangleleft\!t_1(w)\\t(w)\!<\!w\Rmsf^+w'\\w\!:\!\mathbf{1}\!:\!\phi\!>\!t(w)\end{matrix}\right.}
&&
\xcancel{\lozenge}_\triangleleft\!\dfrac{w\!:\!\mathbf{1}\!:\!\xcancel{\lozenge}\phi\!\triangleleft\!\mathfrak{X}}{\begin{matrix}w\!:\!\mathbf{1}\!:\!\xcancel{\lozenge}\phi\!=\!0\\\mathfrak{X}\!\triangleright\!0\end{matrix}\left|\begin{matrix}w\!:\!\mathbf{1}\!:\!\xcancel{\lozenge}\phi\!=\!t_1(w)\\t_1(w)\!\triangleleft\!\mathfrak{X}\\w\Rmsf^+w'\!>\!t(w)\\w'\!:\!\mathbf{1}\!:\!\phi\!>\!t(w)\end{matrix}\right.}
&&
\blacklozenge^2_\triangleleft\!\dfrac{w\!:\!\mathbf{2}\!:\!\blacklozenge\phi\!\triangleleft\!\mathfrak{X}}{\begin{matrix}w\!:\!\mathbf{2}\!:\!\xcancel{\lozenge}\phi\!=\!0\\\mathfrak{X}\!\triangleright\!0\end{matrix}\left|\begin{matrix}w\!:\!\mathbf{2}\!:\!\blacklozenge\phi\!=\!t_1(w)\\t_1(w)\!\triangleleft\!\mathfrak{X}\\w\Rmsf^-w'\!>\!t(w)\\w'\!:\!\mathbf{2}\!:\!\phi\!>\!t(w)\end{matrix}\right.}\end{align*}}
\scriptsize{\begin{align*}
\Box^2_\triangleleft\!\dfrac{w\!:\!\mathbf{2}\!:\!\Box\phi\!\triangleleft\!\mathfrak{X}}{\begin{matrix}w\!:\!\mathbf{2}\!:\!\Box\phi\!=\!0\\\mathfrak{X}\!\triangleright\!0\end{matrix}\left|\begin{matrix}w\!:\!\mathbf{2}\!:\!\Box\phi\!=\!t_1(w)\\t_1(w)\!\triangleleft\!\mathfrak{X}\\w\Rmsf^-w'\!>\!t(w)\\w'\!:\!\mathbf{2}\!:\!\phi\!>\!t(w)\end{matrix}\right.}
&&
\lozenge^2_\triangleright\!\dfrac{w\!:\!\mathbf{2}\!:\!\lozenge\phi\!\triangleright\!\mathfrak{X}}{\begin{matrix}w\!:\!\mathbf{2}\!:\!\lozenge\phi\!=\!1\\1\!\triangleright\!\mathfrak{X}\end{matrix}\left|\begin{matrix}w\!:\!\mathbf{2}\!:\!\lozenge\phi\!=\!t(w)\\\mathfrak{X}\!\triangleleft\!t(w)\\w'\!:\!\mathbf{2}\!:\!\phi\!<\!w\Rmsf^-w'\\w'\!:\!\mathbf{2}\!:\!\phi\!<\!t_1(w)\end{matrix}\right.}
&&
\blacklozenge^2_\triangleright\!\dfrac{w\!:\!\mathbf{2}\!:\!\blacklozenge\phi\!\triangleright\!\mathfrak{X}}{\mathfrak{X}\!\triangleleft\!0\left|\begin{matrix}\mathfrak{X}\!\triangleleft\!t_1(w)\\t(w)\!<\!w\Rmsf^-\!w'\\w\!:\!\mathbf{2}\!:\!\phi\!>\!t(w)\end{matrix}\right.}
\end{align*}}
\scriptsize{
\begin{align*}
\xcancel{\Box}_=\!\dfrac{w\!:\!\mathbf{1}\!:\!\Box\phi\!=\!\mathfrak{X}}{u\!:\!\mathbf{1}\!:\!\phi\!\geqslant\!\mathfrak{X}\left|\begin{matrix}u\!:\!\mathbf{1}\!:\!\phi\!<\!\mathfrak{X}\\w\Rmsf^+u\!\leqslant\!u\!:\!\mathbf{1}\!:\!\phi\end{matrix}\right.}
&&
\Box^2_=\!\dfrac{w\!:\!\mathbf{2}\!:\!\lozenge\phi\!=\!\mathfrak{X}}{w\Rmsf^-u\!\leqslant\!\mathfrak{X}\left|\begin{matrix}u\!:\!\mathbf{2}\!:\!\phi\!\leqslant\!\mathfrak{X}\\w\Rmsf^-\!u\!>\mathfrak{X}\end{matrix}\right.}
&&
\blacksquare^2_=\!\dfrac{w\!:\!\mathbf{2}\!:\!\blacksquare\phi\!=\!\mathfrak{X}}{u\!:\!\mathbf{2}\!:\!\phi\!\geqslant\!\mathfrak{X}\left|\begin{matrix}u\!:\!\mathbf{2}\!:\!\phi\!<\!\mathfrak{X}\\w\Rmsf^-\!u\!\leqslant\!u\!:\!\mathbf{2}\!:\!\phi\end{matrix}\right.}\\[.5em]
\xcancel{\lozenge}_=\!\dfrac{w\!:\!\mathbf{1}\!:\!\xcancel{\lozenge}\phi\!=\!\mathfrak{X}}{w\Rmsf^+u\!\leqslant\!\mathfrak{X}\left|\begin{matrix}u\!:\!\mathbf{1}\!:\!\phi\!\leqslant\!\mathfrak{X}\\w\Rmsf^+\!u\!>\mathfrak{X}\end{matrix}\right.}
&&
\blacklozenge^2_=\!\dfrac{w\!:\!\mathbf{2}\!:\!\blacklozenge\phi\!=\!\mathfrak{X}}{w\Rmsf^+u\!\leqslant\!\mathfrak{X}\left|\begin{matrix}u\!:\!\mathbf{2}\!:\!\phi\!\leqslant\!\mathfrak{X}\\w\Rmsf^-\!u\!>\mathfrak{X}\end{matrix}\right.}
&&
\lozenge^2_=\!\dfrac{w\!:\!\mathbf{2}\!:\!\lozenge\phi\!=\!\mathfrak{X}}{u\!:\!\mathbf{2}\!:\!\phi\!\geqslant\!\mathfrak{X}\left|\begin{matrix}u\!:\!\mathbf{2}\!:\!\phi\!<\!\mathfrak{X}\\w\Rmsf^-\!u\!\leqslant\!u\!:\!\mathbf{2}\!:\!\phi\end{matrix}\right.}\\[.5em]
\xcancel{\Box}_{\!\approx}\!\dfrac{w\!:\!\mathbf{1}\!:\!\xcancel{\Box}\phi\!\approx\!\mathfrak{X}}{\begin{matrix}w\!:\!\mathbf{1}\!:\!\xcancel{\Box}\phi\!=\!1\\\mathfrak{X}\!\!=\!\!1\end{matrix}\left|\begin{matrix}w\!:\!\mathbf{1}\!:\!\xcancel{\Box}\phi\!\!=\!\!t(w)\\\mathfrak{X}\!=\!t(w)\\w'\!:\!\mathbf{1}\!:\!\phi\!\!<\!\!w\Rmsf^+\!w'\\w'\!:\!\mathbf{1}\!:\!\phi\!\!<\!\!t_1(w)\end{matrix}\right.}
&&
\Box^2_{\!\approx}\!\dfrac{w\!:\!\mathbf{2}\!:\!\Box\phi\!\approx\!\mathfrak{X}}{\begin{matrix}w\!:\!\mathbf{2}\!:\!\Box\phi\!\!=\!\!0\\\mathfrak{X}\!=\!0\end{matrix}\left|\begin{matrix}w\!:\!\mathbf{2}\!:\!\Box\phi\!\!=\!\!t_1(w)\\w\Rmsf^-\!w'\!>\!t(w)\\w'\!:\!\mathbf{2}\!:\!\phi\!>\!t(w)\\\mathfrak{X}\!=\!t_1(w)\end{matrix}\right.}
&&
\blacksquare^2_{\!\approx}\!\dfrac{w\!:\!\mathbf{2}\!:\!\xcancel{\Box}\phi\!\approx\!\mathfrak{X}}{\begin{matrix}w\!\!:\!\mathbf{2}\!\!:\!\blacksquare\phi\!\!=\!\!1\\\mathfrak{X}\!\!=\!\!1\end{matrix}\left|\begin{matrix}w\!:\!\mathbf{2}\!:\!\blacksquare\phi\!\!=\!\!t(w)\\\mathfrak{X}\!=\!t(w)\\w'\!\!:\!\mathbf{2}\!\!:\!\phi\!\!<\!\!w\Rmsf^-\!\!w'\\w'\!:\!\mathbf{2}\!:\!\phi\!\!<\!\!t_1(w)\end{matrix}\right.}
\\[.5em]
\xcancel{\lozenge}_{\!\approx}\!\dfrac{w\!:\!\mathbf{1}\!:\!\xcancel{\lozenge}\phi\!\approx\!\mathfrak{X}}{\begin{matrix}w\!:\!\mathbf{1}\!:\!\xcancel{\lozenge}\phi\!\!=\!\!0\\\mathfrak{X}\!=\!0\end{matrix}\left|\begin{matrix}w\!:\!\mathbf{1}\!:\!\xcancel{\lozenge}\phi\!\!=\!\!t_1(w)\\w\Rmsf^+\!w'\!>\!t(w)\\w'\!:\!\mathbf{1}\!:\!\phi\!>\!t(w)\\\mathfrak{X}\!=\!t_1(w)\end{matrix}\right.}
&&
\lozenge^2_{\!\approx}\!\dfrac{w\!:\!\mathbf{2}\!:\!\lozenge\phi\!\approx\!\mathfrak{X}}{\begin{matrix}w\!\!:\!\mathbf{2}\!\!:\!\lozenge\phi\!\!=\!\!1\\\mathfrak{X}\!\!=\!\!1\end{matrix}\left|\begin{matrix}w\!:\!\mathbf{2}\!:\!\lozenge\phi\!\!=\!\!t(w)\\\mathfrak{X}\!=\!t(w)\\w'\!\!:\!\mathbf{2}\!\!:\!\phi\!\!<\!\!w\Rmsf^-\!\!w'\\w'\!:\!\mathbf{2}\!:\!\phi\!\!<\!\!t_1(w)\end{matrix}\right.}
&&
\blacklozenge^2_{\!\approx}\!\dfrac{w\!:\!\mathbf{2}\!:\!\blacklozenge\phi\!\approx\!\mathfrak{X}}{\begin{matrix}w\!:\!\mathbf{2}\!:\!\blacklozenge\phi\!\!=\!\!0\\\mathfrak{X}\!\!=\!\!0\end{matrix}\left|\begin{matrix}w\!\!:\!\mathbf{2}\!\!:\!\blacklozenge\phi\!\!=\!\!t_1(w)\\w\Rmsf^-\!w'\!>\!t(w)\\w'\!:\!\mathbf{2}\!:\!\phi\!>\!t(w)\\\mathfrak{X}\!=\!t_1(w)\end{matrix}\right.}
\end{align*}}
\end{center}

Let $\mathcal{B}=\{\mathcal{c}_1,\ldots,\mathcal{c}_n\}$ be a branch with constraints $\mathcal{c}_1$, \ldots, $\mathcal{c}_n$. We translate (parts of) constraints as follows (below, $\Smsf\in\{\Rmsf^+,\Rmsf^-\}$).
\begin{align}\label{equ:translation}
(w\!:\!\mathbf{i}\!:\!\phi)^\mathcal{t}&=x_{w,i,\phi}&(w\Smsf w')^\mathcal{t}&=x_{w\Smsf w'}&c^\mathcal{t}&=c&(t(w))^\mathcal{t}&=x_{t(w)}\nonumber\\
0^\mathcal{t}&=0&1^\mathcal{t}&=1&(\mathfrak{X}\triangledown\mathfrak{X}')^\mathcal{t}&=\mathfrak{X}^\mathcal{t}\triangledown{\mathfrak{X}'}^\mathcal{t}
\end{align}

$\mathcal{B}$ is \emph{closed} if the following system of inequalities
\begin{align}\label{equ:closure}
\mathcal{c}^\mathcal{t}_1,\ldots,\mathcal{c}^\mathcal{t}_n\\
(t^i_0(w))^\mathcal{t}<(t^i_1(w))^\mathcal{t}\tag{for every $t^i_j(w)$ occurring in $\mathcal{B}$}
\end{align}
\emph{has no solution over $[0,1]$} s.t.\ $\neg\exists x_{t(w)},x_{t^i_0(w)},x_{t^i_1(w)}:x_{t^i_0(w)}<x_{t(w)}<x_{t^i_1(w)}$.

An \emph{open} (i.e., non-closed) branch $\mathcal{B}$ is \emph{complete} when for every premise of any rule occurring on $\mathcal{B}$, its conclusion also occurs on $\mathcal{B}$.

Finally, $\phi\in\fullLbilattice$ \emph{has a~$\TKblG$ proof} if there are 
\emph{two tableaux} beginning with $\{w:\mathbf{1}:\phi<c,c<1\}$ and $\{w:\mathbf{2}:\phi>d,d>0\}$ s.t.\ all their branches are closed. Similarly, $\chi\in\bimodalLinv$ \emph{has a~$\TKblG$ proof} if there is a~tableaux beginning with $\{w:\mathbf{1}:\chi<c,c<1\}$ s.t.\ all its branches are closed.
\end{definition}
\begin{figure}
\centering
\fontsize{6.5}{7.25}\selectfont{\begin{forest}
smullyan tableaux
[{w\!\!:\!\!\Box p\!\!\rightarrow\!\!\invol\lozenge\invol p\scalebox{.7}[1]{$\leqslant$}c}
[{c\scalebox{.7}[1]{$<$}1}
[{w\!\!:\!\!\Box p\scalebox{.7}[1]{$>$}c'}
[{c'\scalebox{.7}[1]{$\leqslant$}c}
[{w\!\!:\!\!\lozenge\invol p\scalebox{.7}[1]{$\approx$}1\scalebox{.7}[1]{-}c'}
[w\!\!:\!\!{{\Box p\scalebox{.7}{$=$}1}}
[{w\!\!:\!\!\lozenge\invol p\scalebox{.7}{$=$}t_1}[w\Rmsf u\!\!>\!\!t_0[{u\!\!:\!\!p\scalebox{.7}{$<$}1\scalebox{.7}[1]{-}t_0}
[{w\Rmsf u\scalebox{.7}{$\leqslant$}t_1}[{u\!\!:\!\!p\scalebox{.7}{$\geqslant$}1}[\times]][{u\!\!:\!\!p\scalebox{.7}{$<$}1}[{u\!\!:\!\!p\scalebox{.7}{$\geqslant$}w\Rmsf u}
[\circledast_A]
]]]
[{w\Rmsf u\scalebox{.7}{$>$}t_1}[{u\!\!:\!\!p\scalebox{.7}{$\geqslant$}1\scalebox{.7}[1]{-}t_1}
[{u\!\!:\!\!p\scalebox{.7}{$\geqslant$}1}[\times]][{u\!\!:\!\!p\scalebox{.7}{$<$}1}[{u\!\!:\!\!p\scalebox{.7}{$\geqslant$}w\Rmsf u}
[\circledast]
]]]]]]]
[{w\!\!:\!\!\lozenge\invol p\scalebox{.7}{$\approx$}0}[{1\scalebox{.7}[1]{-}c'\scalebox{.7}{$=$}0}[\times]]]]
[{w\!\!:\!\!\Box p\scalebox{.7}{$=$}t_0}
[{c'\scalebox{.7}{$<$}t_0}[{u\!\!:\!\!p\scalebox{.7}{$<$}t_1}[{u\!\!:\!\!p\scalebox{.7}{$<$}w\Rmsf u}
[{w\!\!:\!\!\lozenge\invol p\scalebox{.7}{$\approx$}0}[{1\scalebox{.7}[1]{-}c'\scalebox{.7}{$=$}0}[\times]]][{w\!\!:\!\!\lozenge\invol p\scalebox{.7}{$=$}t'_1}[{w\Rmsf x\scalebox{.7}{$>$}t'_0}[{x\!\!:\!\!p\scalebox{.7}{$<$}1\scalebox{.7}[1]{-}t'_0}
[{w\Rmsf x\scalebox{.7}{$\leqslant$}t'_1}[{u\!\!:\!\!p\scalebox{.7}{$\geqslant$}t_0}
[{x\!\!:\!\!p\scalebox{.7}{$\geqslant$}t_0}[{w\Rmsf u\scalebox{.7}{$\leqslant$}t'_1}[\circledast]][{w\Rmsf u\scalebox{.7}{$>$}t'_1}[{u\!\!:\!\!p\scalebox{.7}{$\geqslant$}1\scalebox{.7}[1]{-}t'_1}[\circledast]]]][{x\!\!:\!\!p\scalebox{.7}{$<$}t_0}[{x\!\!:\!\!p\scalebox{.7}{$\geqslant$}w\Rmsf x},s sep=0[{w\Rmsf u\scalebox{.7}{$\leqslant$}t'_1}[\circledast]][{w\Rmsf u\scalebox{.7}{$>$}t'_1}[{u\!\!:\!\!p\scalebox{.7}{$\geqslant$}1\scalebox{.7}[1]{-}t'_1}[\circledast]]]]]
]
[{u\!\!:\!\!p\scalebox{.7}{$<$}t_0}[{u\!\!:\!\!p\scalebox{.7}{$\geqslant$}w\Rmsf u}[\times]]]][{w\Rmsf x\scalebox{.7}{$>$}t'_1}[{x\!\!:\!\!p\scalebox{.7}{$\geqslant$}1\scalebox{.7}[1]{-}t'_1}[u\!\!:\!\!p\scalebox{.7}{$\geqslant$}t_0
[{x\!\!:\!\!p\scalebox{.7}{$\geqslant$}t_0}[{w\Rmsf u\scalebox{.7}{$\leqslant$}t'_1}[\circledast]][{w\Rmsf u\scalebox{.7}{$>$}t'_1}[{u\!\!:\!\!p\scalebox{.7}{$\geqslant$}1\scalebox{.7}[1]{-}t'_1}[\circledast]]]][{x\!\!:\!\!p\scalebox{.7}{$<$}t_0}[{x\!\!:\!\!p\scalebox{.7}{$\geqslant$}w\Rmsf x}[{w\Rmsf u\scalebox{.7}{$\leqslant$}t'_1}[\circledast]][{w\Rmsf u\scalebox{.7}{$>$}t'_1}[{u\!\!:\!\!p\scalebox{.7}{$\geqslant$}1\scalebox{.7}[1]{-}t'_1}[\circledast_B]]]]]
]
[{u\!\!:\!\!p\scalebox{.7}{$<$}t_0}[{u\!\!:\!\!p\scalebox{.7}{$\geqslant$}w\Rmsf u}[\times]]]]]]]]]]]]]]]]]
\end{forest}}
\caption{A failed proof of $\Box p\!\rightarrow\!\invol\lozenge\invol p$ on fuzzy frames. $\circledast$'s mark open branches.}
\label{fig:proofexample}
\end{figure}
\begin{definition}[Model realising a~branch]\label{def:realisingmodel}
$\mathcal{B}$ is \emph{realised by a~mo\-del} $\mathfrak{M}=\langle W,R^+,R^-,T_1,T_2,v_1,v_2\rangle$ with $W=\{w:w\text{ occurs on }\mathcal{B}\}$ iff there is is a~map $\real:\Str\rightarrow[0,1]$ s.t.\ for every structure occurring in $\mathcal{B}$:
\begin{itemize}[noitemsep,topsep=2pt]
\item $\real(0)\!=\!0$, $\real(1)\!=\!1$, and $\real(\mathfrak{X})\triangledown\real(\mathfrak{X}')$ if $\mathfrak{X}\triangledown\mathfrak{X}'\!\in\!\mathcal{B}$ (if $\mathfrak{X}$ is not on $\mathcal{B}$, $\real(\mathfrak{X})\!=\!0$);
\item $\real(w:\mathbf{i}:\phi)=v_i(\phi,w)$, $\real(w\Rmsf^+w')=wR^+w'$, and $\real(w\Rmsf^-w')=wR^-w'$;
\item if $\real(t)=x$, then $\real(1-t)=1-x$ for every value term $t$;
\item $T_i(w)=\{\real(t(w))\mid u:\mathbf{i}:\phi\triangledown t(w)\in\mathcal{B}\}\cup\{0,1\}$;
\item there are no $\real(t'(w)),\real(t_0(w)),\real(t_1(w))\!\!\in\!\!T_i$ s.t.\ $\real(t_0(w))\!\!<\!\!\real(t'(w))\!\!<\!\!\real(t_1(w))$.
\end{itemize}
\end{definition}
\begin{example}[Failed tableau proof and realising models]\label{example:tableau}
We show a failed proof of $\Box p\rightarrow\invol\lozenge\invol p$ in $\TKblG$. For brevity, we treat it as an~$\bimodalLinv$ formula: we write $\Rmsf$ instead of $\Rmsf^+$ and omit $\mathbf{1}$ in constraints. Additionally, we skip the applications of the $\invol$ rule and write $t_0$, $t'_1$, etc.\ instead of $t^0_0(w)$, $t^1_1(w)$,\ldots since we evaluate modal formulas in one state only. The proof is in Fig.~\ref{fig:proofexample}.

Consider now open branches $\circledast_A$ and $\circledast_B$. For each of them, there are uncountably many realising models corresponding to solutions of $(\circledast_A)^\mathcal{t}$ and $(\circledast_B)^\mathcal{t}$ (cf.~\eqref{equ:translation} and~\eqref{equ:closure}). Examples of models are shown in Fig.~\ref{fig:leftmodel} and~\ref{fig:rightmodel}. One can see that $v(\Box p,w)=1$ and $v(\invol\lozenge\invol p,w)=\frac{3}{4}$ in the left model and $v(\Box p,w)=\frac{9}{10}$ and $v(\invol\lozenge\invol p,w)=\frac{8}{9}$ in the right model.
\begin{figure}
\begin{minipage}{.44\textwidth}
\centering
\small{\begin{align*}
\xymatrix{w~\ar[rr]|(.3){\sfrac{1}{5}}&&~u\!:\!p\!=\!\sfrac{2}{5}}\\[.3em]
T(w)\!=\!\left\{0,\sfrac{1}{6},\sfrac{1}{4},1\right\}\\[.3em]
\real(t_0)\!=\!\sfrac{1}{6},\real(t_1)\!=\!\sfrac{1}{4}
\end{align*}}
\caption{Realising model for~$\circledast_A$.}
\label{fig:leftmodel}
\end{minipage}
\hfill
\begin{minipage}{.54\textwidth}
\centering
\small{\begin{align*}
\xymatrix{x\!:\!p\!=\!\sfrac{8}{9}~&&w~\ar[ll]|(.3){\sfrac{1}{4}}\ar[rr]|(.3){\sfrac{13}{14}}&&~u\!:\!p\!=\!\sfrac{11}{12}}\\[.3em]
T(w)=\left\{0,\sfrac{1}{11},\sfrac{1}{9},\sfrac{9}{10},\sfrac{12}{13},1\right\}\\[.3em]
\real(t'_0)\!=\!\sfrac{1}{11},\real(t'_1)\!=\!\sfrac{1}{9},\real(t_0)\!=\!\sfrac{9}{10},\real(t_1)\!=\!\sfrac{12}{13}
\end{align*}}
\caption{Realising model for $\circledast_B$.}
\label{fig:rightmodel}
\end{minipage}
\end{figure}
\end{example}
\begin{theorem}[$\TKblG$ completeness]\label{theorem:TKblGcompleteness}~
\begin{enumerate}[noitemsep,topsep=2pt]
\item $\phi\in\fullLbilattice$ is $\KblG$-valid iff it has a~$\TKblG$ proof.
\item $\chi\in\bimodalLinv$ is $\KinvG$-valid iff it has a~$\TKblG$ proof.
\end{enumerate}
\end{theorem}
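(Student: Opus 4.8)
The plan is to prove both items at once by showing that $\TKblG$ is sound and complete with respect to the $\Fmsf$-model semantics of Definitions~\ref{def:F-KinvG} and~\ref{def:F-KblG}, and then invoking the coincidence of $\FKblG$ with $\KblG$ (and of $\FKinvG$ with $\KinvG$) established in Section~\ref{sec:Fmodels}. Item~2 is the restriction of item~1 to $\bimodalLinv$-formulas and to the index $\mathbf{1}$: since a $\KinvG$ model carries a single valuation, $\KinvG$-validity means $v(\chi,w)=1$, which is exactly what the single tableau starting from $\{w:\mathbf{1}:\chi<c,c<1\}$ tests, and the $\mathbf{1}$-fragment of the rules over $\bimodalLinv$ reproduces the $\Fmsf$-clauses of Definition~\ref{def:F-KinvG} verbatim. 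Hence I would treat item~1 in detail and note that item~2 follows by the same argument with the index $\mathbf{2}$ suppressed.

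For \emph{soundness} (provable $\Rightarrow$ valid, argued contrapositively) the central lemma is \emph{rule soundness}: if an $\Fmsf$-model $\mathfrak{M}$ realises a branch $\mathcal{B}$ in the sense of Definition~\ref{def:realisingmodel}, then for every rule applied to a constraint of $\mathcal{B}$ at least one resulting extension of $\mathcal{B}$ is again realised by some $\Fmsf$-model extending $\mathfrak{M}$ (supplying values for the fresh $c$, the fresh world $w'$, and the fresh terms $t_0(w),t_1(w)$). This is checked rule by rule, reading each rule backwards against its semantic clause. A realised branch cannot be closed: the map $\real$ furnishes precisely a solution of the system~\eqref{equ:closure} that moreover respects the side-condition forbidding a value strictly between $t_0(w)$ and $t_1(w)$, because each $T_i(w)$ is by construction the finite set of realised term-values at $w$. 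Consequently, if $\phi$ is not $\KblG$-valid then it fails $v_1$- or $v_2$-validity; the equivalence with $\Fmsf$-models supplies an $\Fmsf$-countermodel, which realises the corresponding initial branch $\{w:\mathbf{1}:\phi<c,c<1\}$ or $\{w:\mathbf{2}:\phi>d,d>0\}$ (picking $\real(c)$, resp.\ $\real(d)$, strictly between the realised value of $\phi$ and $1$, resp.\ $0$), and rule soundness propagates realisability to an open branch of that tableau. Hence the tableau does not close and $\phi$ has no $\TKblG$ proof.

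For \emph{completeness} (valid $\Rightarrow$ provable, via model extraction) suppose $\phi$ has no proof, so one of the two initial tableaux has an open complete branch $\mathcal{B}$. I first argue $\mathcal{B}$ may be taken finite: the propositional rules strictly decrease formula weight, the modal rules strictly decrease modal depth while spawning only finitely many successors per world, and the subformula property together with the bounds $\mathcal{h}(\widehat{\mathfrak{M}})\leq|\phi|$ and $|T(w')|\leq|\Sigma|$ from Propositions~\ref{prop:lemma1} and~\ref{prop:lemma2-3} cap both the number of worlds and the number of distinct terms per world. A finite open complete branch yields a finite, solvable instance of~\eqref{equ:closure}; fixing one solution defines an $\Fmsf$-model $\mathfrak{M}$ exactly as in Definition~\ref{def:realisingmodel}, with $R^{\pm}$ and the $v_i$ read off the solution and each $T_i(w)$ the finite set of realised term-values (which contains $0,1$). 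The core step is an induction on $\phi$ showing $\real(w:\mathbf{i}:\phi)=v_i(\phi,w)$: completeness of $\mathcal{B}$ guarantees every applicable rule has fired, so for each connective the recorded constraints force the corresponding $\Fmsf$-clause at the solution. The initial constraint then gives $v_1(\phi,w)<1$ (resp.\ $v_2(\phi,w)>0$), so $\phi$ fails in some $\Fmsf$-model and hence $\phi\notin\KblG$.

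The main obstacle is the modal induction step in model extraction, and dually the modal cases of rule soundness. The subtlety is that the $\Fmsf$-clauses evaluate $\Box,\lozenge,\blacksquare,\blacklozenge$ as the $\max$/$\min$ over the finite set $T_i(w)$ of a supremum/infimum over successors; the rules encode this through the paired fresh terms $t_0(w),t_1(w)$ together with the side-condition forbidding any realised value strictly between them, so that $t_0(w)$ and $t_1(w)$ behave as consecutive elements of $T_i(w)$ straddling the true sup/inf. Verifying that the recorded constraints pin $v_i(\xcancel{\Box}\phi,w)$ and $v_i(\xcancel{\lozenge}\phi,w)$ down to exactly the right element of $T_i(w)$ — and, in the $\mathbf{2}$-clauses, tracking whether $\Rmsf^+$ or $\Rmsf^-$ governs each modality — is where the bookkeeping is heaviest; everything else reduces to routine checks against the Gödel truth tables.
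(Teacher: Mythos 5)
Your proposal is correct and follows essentially the same route as the paper: soundness by checking rule-by-rule that a realising $\Fmsf$-model of the premise realises at least one conclusion (so closed branches are unrealisable), and completeness by extracting a realising $\Fmsf$-model from a solution of the constraint system~\eqref{equ:closure} on a complete open branch, then transferring to $\KblG$/$\KinvG$ via the equivalence with $\Fmsf$-models from Section~\ref{sec:Fmodels}. Your additional remarks on termination of branch construction and on the reduction of item~2 to the $\mathbf{1}$-fragment are refinements the paper leaves implicit, not a different argument.
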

\begin{proof}
We prove only 1.\ since 2.\ can be shown in the same way. The proof is standard and follows~\cite[Theorems~6.19 and~6.21]{Rogger2016phd}. 
%
%
The soundness proof is straightforward, and we put it in the appendix (Section~\ref{ssec:theorem:TKblGcompleteness:soundness}). For completeness, we prove that every complete open branch $\mathcal{B}$ has a~realising model. We consider a~solution for $(\mathcal{B})^\mathcal{t}$ as specified in Definition~\ref{def:TKblG} and define the realising model $\mathfrak{M}$ as follows: $W=\{w\mid w\text{ occurs on }\mathcal{B}\}$ and $\real(\mathfrak{X})=\mathcal{V}(\mathfrak{X}^\mathcal{t})$ with $\mathcal{V}(\mathfrak{X}^\mathcal{t})$ being the value of $\mathfrak{X}^\mathcal{t}$ in the solution of $(\mathcal{B})^\mathcal{t}$. Namely, $w\Smsf w'=\mathcal{V}(x_{w\Smsf w'})$, $v_i(\phi,w)=\mathcal{V}(x_{w,i,\phi})$, $T_i(w)=\{\mathcal{V}(x_{t(w)})\mid u\!:\!\mathbf{i}\!:\!\phi\triangledown t(w)\in\mathcal{B}\}\cup\{0,1\}$.

It remains to show that all formulas on $\mathcal{B}$ are realised. Constraints of the form $w\!:\!\mathbf{i}\!:\!p\triangledown\mathfrak{X}$ are realised by construction. For complex formulas, we proceed by induction. The cases of propositional connectives can be shown by simple applications of the induction hypothesis. Modalities can all be tackled similarly.

Assume that $w\!:\!\mathbf{1}\!:\!\Box\psi\approx\mathfrak{X}\!\in\!\mathcal{B}$. Since $\mathcal{B}$ is complete, we have two cases (cf.~rule $\xcancel{\Box}_\approx$). First, $w:\mathbf{1}:\Box\psi=1$ and \underline{$\mathfrak{X}=1$} are also in $\mathcal{B}$. Thus, by rule $\xcancel{\Box}_=$, for every $w'$ s.t.\ $w\Rmsf^+\!w'$ is on $\mathcal{B}$\footnote{Recall that $\real(w\Rmsf^+w')=0$ if $w\Rmsf^+\!w'$ is \emph{not} on $\mathcal{B}$. Thus, the value of $\psi$ there does not affect the value of $\Box\psi$.}, either $\underline{w':\mathbf{1}:\psi\geqslant1}\in\mathcal{B}$ or $\underline{w':\mathbf{1}:\psi<1}\in\mathcal{B}$ and $\underline{w':\mathbf{1}:\psi\geqslant w\Rmsf^+w'}\in\mathcal{B}$. By the induction hypothesis, the underlined constraints are realised. But then $v_1(\Box\psi,w)=1$.

In the second case, $w\!:\!\mathbf{1}\!:\!\Box\psi\!=\!t(w)$ and $\underline{\mathfrak{X}\!=\!t(w)}$ are on $\mathcal{B}$, and there are fresh $w'$ and $t(w)\!\in\!\Tmsf$ s.t.\ $w\!:\!\mathbf{1}\!:\!\Box\psi=t(w)$, $\underline{w'\!:\!\mathbf{1}\!:\!\psi<w\Rmsf^+\!w'}$, and $\underline{w'\!:\!\mathbf{1}\!:\!\phi<t_1(w)}$ belong to $\mathcal{B}$.  Furthermore, by $\xcancel{\Box}_=$, for each $w'$ s.t.\ $w\Rmsf^+\!w'$ is on $\mathcal{B}$, $\underline{w'\!:\!\mathbf{1}\!:\!\psi\!\geqslant\!t(w)}$ is on $\mathcal{B}$ or both $\underline{w':\mathbf{1}:\psi<t(w)}$ and $\underline{w'\!:\!\mathbf{1}\!:\!\psi\geqslant w\Rmsf^+\!w'}$ are on $\mathcal{B}$. By the induction hypothesis, the underlined constraints are realised. I.e., $w\Rmsf^+w'\!>\!v(\psi,w')\!\geqslant\!\real(t(w))$, and $\real(t_1(w))\!>\!v(\psi,w')$ and $v(\psi,w'')\!\geqslant\!t(w)$ in every $w''$ s.t.\ $w\Rmsf^+\!w''\!>\!v(\psi,w'')$. Hence, $v_1(\Box\psi,w)\!=\!t(w)\!=\!\real(\mathfrak{X})$.
\end{proof}

\begin{theorem}\label{theorem:pspacecompleteness}
Validity of $\KblG$ and $\KinvG$ is $\pspace$-complete.
\end{theorem}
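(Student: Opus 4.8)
The plan is to prove the two directions separately: $\pspace$-hardness follows from the known hardness of $\KG$, while the $\pspace$ upper bound is obtained by turning the constraint tableaux $\TKblG$ of Definition~\ref{def:TKblG} into a space-bounded decision procedure. Throughout I would treat $\KinvG$ and $\KblG$ uniformly, since Theorem~\ref{theorem:TKblGcompleteness} shows that the same calculus decides both.

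For $\pspace$-\emph{hardness} I would first observe that $\KinvG$ conservatively extends $\KG$ over the fragment $\bimodalL$: on an $\invol$-free formula a $\KinvG$ model evaluates exactly as a $\KG$ model, and conversely every $\KG$ model is a $\KinvG$ model, so for $\phi\in\bimodalL$ we have $\phi$ $\KG$-valid iff $\phi$ $\KinvG$-valid. Since validity in $\KG$ over fuzzy frames is already $\pspace$-hard \cite{CaicedoMetcalfeRodriguezRogger2013,CaicedoMetcalfeRodriguezRogger2017}, so is validity in $\KinvG$. Hardness then transfers to $\KblG$ through the faithful, polynomial-size embedding of Theorem~\ref{theorem:embedding}(1): reading a $\KinvG$ formula as a $\bimodalLinv(2)$ formula using only $\Box_1,\lozenge_1$, we get $\mathfrak{F},w\models_{\KinvG}\phi$ iff $\mathfrak{F},w\models_{\KblG}\both\rightarrow\phi^{\Join}$, which is a logspace reduction.

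For the $\pspace$ \emph{upper bound} I would adapt the argument of \cite[Ch.~6]{Rogger2016phd} and \cite{CaicedoMetcalfeRodriguezRogger2013} to our constraints. By Theorem~\ref{theorem:TKblGcompleteness}, validity reduces to the non-existence of a complete open branch, and by Proposition~\ref{prop:lemma1}(b) a realising model can be taken tree-like of height at most $|\phi|$. This licenses a recursive, world-by-world search: at the current world one saturates all propositional, $\neg$-, $-$- and $\invol$-rules, producing a set of constraints over only the polynomially many terms $t^i_j(w)$, relation terms, and subformula-labels at $w$; local consistency of this set is feasibility of a system of linear inequalities over $[0,1]$ augmented with the $1-t$ terms, decidable in $\mathsf{P}$. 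Each box-/diamond-type modal constraint then forces a single witness successor and propagates the universal obligations of $w$ to it, so at most $|\phi|$ successor-subproblems arise, each of size polynomial in $|\phi|$ and each solved by a recursive call of strictly smaller modal depth. Processing these subproblems one at a time and reusing the work space gives a nondeterministic procedure that keeps only a single root-to-leaf path of length $\le|\phi|$ together with polynomially much local data in memory; by Savitch's theorem ($\mathsf{NPSpace}=\pspace$) and closure of $\pspace$ under complementation, validity of both $\KinvG$ and $\KblG$ lies in $\pspace$.

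I expect the main obstacle to be this upper bound, specifically the interaction between the real-valued constraints and the creation of fresh worlds: one must guarantee that the accumulated constraint systems remain polynomially bounded along every branch and that the values assigned to the modal terms $t^i_j(w)$ at a parent are jointly realisable with the values independently chosen at its successors. As in \cite{CaicedoMetcalfeRodriguezRogger2013}, I would handle this by first guessing a linear pre-order on the finitely many relevant terms and then verifying the resulting order-constraints level by level. The genuinely new point compared to \cite{Rogger2016phd} is that the involution forces the $1-t$ terms and the finite truth-value sets $T_i(w)$ into this pre-order (cf.\ the underlined hypothesis of Proposition~\ref{prop:lemma1}(c)), and one must thread these through the feasibility checks—together with the "no intermediate value" side condition of Definition~\ref{def:structure}—without breaking the polynomial space bound.
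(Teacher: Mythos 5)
Your proposal is correct and follows essentially the same route as the paper: $\pspace$-hardness via the conservativity of $\KinvG$ over $\KG$ plus the embedding of Theorem~\ref{theorem:embedding}(1), and the upper bound via a depth-first, witness-by-witness exploration of the constraint tableau in the style of the standard algorithm for $\mathbf{K}$, storing only one root-to-leaf path of depth $\le|\phi|$ with polynomial local data and checking feasibility of the accumulated linear constraints by guess-and-verify. The only cosmetic difference is that you make the appeal to $\mathsf{NPSpace}=\pspace$ and the ordering of the $t^i_j(w)$ and $1-t$ terms explicit, whereas the paper folds both into ``guess the branch/realising model and verify in polynomial time.''
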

\begin{proof}
$\pspace$-hardness is simple. Recall that $\KG$ is $\pspace$-hard~\cite{CaicedoMetcalfeRodriguezRogger2017} and $\KinvG$ conservatively expands $\KG$. For $\KblG$, Theorem~\ref{theorem:embedding} gives us the reduction of $\KinvG$-validity to $\KblG$-validity. For $\pspace$-membership, we adapt the standard algorithm for $\mathbf{K}$ from~\cite{BlackburndeRijkeVenema2010}.

To show that $\phi$ is valid, begin the tableau with $\{w_0:\mathbf{1}:\phi\leqslant c,c<1\}$.\footnote{The tableau for $\{w_0:\mathbf{2}:\phi>d,d>0\}$ can be built in the same way.} First, apply propositional rules beginning with non-branching ones. If a~rule requires branching, we guess one branch of the tableau and work with it until we can close it (in which case, we go for the next branch) or until it is complete and open (in which case, $\phi$ is not valid). If all branches \emph{in both tableaux} are closed, $\phi$ is valid. Once all propositional rules are applied, we guess the realising model as given in Definitions~\ref{def:TKblG} and~\ref{def:realisingmodel}. Note that \emph{verifying the solution} takes polynomial time w.r.t.\ $|\phi|$ since we are decomposing it. If there is no solution, the branch is closed. Else, we pick one modal formula, say $w\!:\!\mathbf{1}\!:\!\Box\chi\!\leqslant\!\mathfrak{X}$ and apply the rule (in our case, $\xcancel{\Box}_\triangleleft$) and guess the branch. If a new state (say, $w^1_1$) was introduced, apply propositional rules to $\chi$ in $w^1_1$ and check whether the branch became closed. If the branch is still open, pick the next modal formula and repeat the process with all modal formulas in $w_0$ guessing the branches when needed. This will add new states $w^1_1$, \ldots, $w^1_{k_1}$ (we call them \emph{successors of $w_0$}) with $k_1\leq|\phi|$ for each of which we need $O(|\phi|)$ space to store immediate subformulas of $\phi$, a~relational term $w_0\Rmsf^+w^1_i$ or $w_0\Rmsf^-w^1_i$, and $t(w_0)$'s. After checking that the branch is not closed, apply modal rules that use the introduced states. Then, apply propositional rules in each introduced state and check whether the branch is closed.

If the branch is not closed, we pick $w^1_1$ and repeat the process: pick a modal formula, e.g., $w^1_1:\mathbf{2}:\blacklozenge\psi>\mathfrak{X}'$, apply the needed modal rule and guess the branch. If a new state ($w^2_1$) is introduced, we decompose other modal formulas in $w^2_1$, generate $w^2_2$, \ldots, $w^2_{k_2}$, and then apply the rules that use these states. We repeat this until we reach a state $w^m_i$ s.t.\ all states $w^{m+1}_1,\ldots,w^{m+1}_{k_{m+1}}$ generated from it contain only propositional formulas. If the branch is not closed, we delete $w^{m+1}_1,\ldots,w^{m+1}_{k_{m+1}}$, mark $w^m_i$ safe, and proceed to $w^m_{i+1}$. Once all successors of $w^m_{i+1}$ are marked safe, it is marked safe, and we proceed to $w^m_{i+2}$, etc. If $w_0$ is marked safe, the branch of the tableau is complete and open (whence, $\phi$~is not valid). Note that the depth of the model is bounded by $|\phi|$ and that each state of the model has $O(|\phi|)$ successors. However, at each stage of the procedure, we have to store only $O(|\phi|^2)$ states each of which requires $O(|\phi|)$ space to store subformulas of $\phi$, their values, elements of $T_1$ and $T_2$, and relational terms.
\end{proof}
\section{Conclusion\label{sec:conclusion}}
We presented two expansions of the G\"{o}del modal logic: $\KinvG$ --- with the involutive negation and $\KblG$ --- with bi-lattice connectives. We showed that they can be faithfully embedded into one another and also obtained the $\pspace$-completeness of their validity via a tableaux calculus that we constructed for an alternative semantics following~\cite{CaicedoMetcalfeRodriguezRogger2013,CaicedoMetcalfeRodriguezRogger2017}.

Our next steps are as follows. First, we want to provide complete Hilbert-style axiomatisations of $\KinvG$ and $\KblG$ both over crisp and over fuzzy frames. Second, we plan a more systematic study of the correspondence theory of $\KinvG$ and $\KblG$. Third, in~\cite{MetcalfeOlivetti2009,MetcalfeOlivetti2011} hypersequent calculi for $\Box$- and $\lozenge$-fragments of $\KG$ are presented. It makes sense to come up with terminating hypersequent calculi for the full $\KG$ and its expansions constructed in this paper as well as in~\cite{BilkovaFrittellaKozhemiachenko2022IJCAR,BilkovaFrittellaKozhemiachenko2023IGPL}.

In parallel with these tasks, we intend to build off the work of \cite{BobilloDelgadoGomez-RamiroStraccia2009} to engineer G\"{o}del description logics that are able to record subtle features of roles. The foregoing work supports multi-lateral treatments of roles, allowing a description language to track the verification, falsification, and informativeness of a role's holding between two terms. Planned future work will develop these intuitions, exploring both their formalisation and potential applications.
\bibliographystyle{splncs04}
\bibliography{reference}
\newpage
\appendix
\section{Proofs of Section~\ref{sec:KinvG}}
\subsection{Theorem~\ref{theorem:Boxlozengeinterdefinability}, item~2\label{ssec:theorem:Boxlozengeinterdefinability}}
We show that $\Box$ and $\lozenge$ are not interdefinable on fuzzy frames.
\begin{proof}
Take the model below (all variables have the same values as $p$) and note that $v(\Box p,w)=\sfrac{1}{5}$ and $v(\lozenge p,w)=\sfrac{1}{4}$.
\begin{align*}
\xymatrix{w'':p=\sfrac{1}{4}~&&~\ar[ll]|{\sfrac{2}{3}}w:p=1\ar[rr]|{\sfrac{2}{3}}&&~w':p=\sfrac{1}{5}}
\end{align*}

First, we obtain by induction that
\begin{align}
\forall\tau\in\bimodalLinv:v(\tau,w')\in\left\{0,\sfrac{1}{5},\sfrac{4}{5},1\right\}\text{ and }v(\tau,w'')\in\left\{0,\sfrac{1}{4},\sfrac{3}{4},1\right\}\label{equ:ab}
\tag{a}
\end{align}
The basis cases in~\eqref{equ:ab} hold by the construction of the model and the cases of propositional connectives are obtained by the application of the induction hypothesis. Finally, as $R(w')=R(w'')=\varnothing$, for $\tau=\lozenge\sigma$, we have $v(\lozenge\sigma,w')=v(\lozenge\sigma,w'')=0$; and for $\tau=\Box\sigma$, we have that $v(\Box\sigma,w')=v(\Box\sigma,w'')=1$.

Then, we can show for every $\tau\in\bimodalLinv$ that
\begin{align}\label{equ:Boxlozengeinterdefinability}
v(\tau,w'')=0&\Rightarrow v(\tau,w')=0&v(\tau,w'')=\sfrac{1}{4}&\Rightarrow v(\tau,w')=\sfrac{1}{5}\nonumber\\
v(\tau,w'')=\sfrac{3}{4}&\Rightarrow v(\tau,w')=\sfrac{4}{5}&v(\tau,w'')=1&\Rightarrow v(\tau,w')=1
\tag{b}
\end{align}
and
\begin{align}\label{equ:Boxlozengeinterdefinabilityback}
v(\tau,w')=0&\Rightarrow v(\tau,w'')=0&v(\tau,w')=\sfrac{1}{5}&\Rightarrow v(\tau,w'')=\sfrac{1}{4}\nonumber\\
v(\tau,w')=\sfrac{4}{5}&\Rightarrow v(\tau,w'')=\sfrac{3}{4}&v(\tau,w')=1&\Rightarrow v(\tau,w'')=1
\tag{b$'$}
\end{align}
For~\eqref{equ:Boxlozengeinterdefinability}, we proceed by induction. The basis case of $\tau=p$ holds by the construction of the model. If $\tau=\lozenge\sigma$, then $v(\lozenge\sigma,w')=v(\lozenge\sigma,w'')=0$. If $\tau=\Box\sigma$, then $v(\Box\sigma,w')=v(\Box\sigma,w'')=1$.

Let $\tau=\varrho\wedge\sigma$. The cases of $v(\tau,w'')\in\{0,1\}$ are straightforward. Assume that $v(\varrho\wedge\sigma,w'')=\frac{1}{4}$, then w.l.o.g.\ $v(\varrho,w'')=\frac{1}{4}$ and $v(\sigma,w'')\in\{\frac{1}{4},\frac{3}{4},1\}$. By the induction hypothesis, we have that $v(\varrho,w'')=\frac{1}{5}$ and $v(\sigma,w'')\in\{\frac{1}{5},\frac{4}{5},1\}$, whence, $v(\varrho\wedge\sigma,w')=\frac{1}{5}$. The case when $v(\varrho\wedge\sigma,w'')=\frac{3}{4}$ is tackled in the same way. Consider $\tau=\varrho\rightarrow\sigma$. We deal only with the most instructive case of $v(\varrho\rightarrow\sigma,w'')=\frac{1}{4}$. Here, we have $v(\sigma,w'')=\frac{1}{4}$ and $v(\varrho,w'')\in\{1,\frac{3}{4}\}$. By the induction hypothesis, we obtain $v(\sigma,w')=\frac{1}{5}$ and $v(\varrho,w')\in\{\frac{4}{5},1\}$. Hence, $v(\varrho\rightarrow\sigma,w')=\frac{1}{5}$.

For~\eqref{equ:Boxlozengeinterdefinabilityback}, the case of $\tau=p$ holds by construction; if $\tau=\Box\sigma$, then $v(\Box\sigma,w')=v(\Box\sigma,w'')=1$; if $\tau=\lozenge\sigma$, then $v(\lozenge\sigma,w')=v(\lozenge\sigma,w'')=0$. The remaining cases of $\tau=\varrho\wedge\sigma$ and $\tau=\varrho\rightarrow\sigma$ can be dealt with in the same way as for~\eqref{equ:Boxlozengeinterdefinability}.

We now show by induction that (i) there is no $\Box$-free formula $\chi$ s.t.\ $v(\chi,w)\in\left\{\frac{1}{5},\frac{4}{5}\right\}$ and (ii) there is no $\lozenge$-free formula $\psi$ s.t.\ $v(\psi,w)\in\left\{\frac{1}{4},\frac{3}{4}\right\}$.

For (i), the basis case holds by the construction of the model. The cases of propositional connectives can also be established by a straightforward application of the induction hypothesis. Now let $\chi=\lozenge\tau$. It is clear that $v(\lozenge\tau,w)\neq\frac{4}{5}$ since $wRw'=wRw''=\frac{2}{3}$. We check that $v(\lozenge\tau,w)\neq\frac{1}{5}$.

Assume for contradiction that $v(\lozenge\tau,w)=\frac{1}{5}$. Then, $v(\tau,w')=\frac{1}{5}$ from~\eqref{equ:ab}. But using~\eqref{equ:Boxlozengeinterdefinability}, we have that $v(\tau,w'')=\frac{1}{4}$, whence $v(\lozenge\tau,w)=\frac{1}{4}$. Contradiction.

For (ii), the basis case holds by the construction of the model and the cases of propositional connectives can be proven by the application of the induction hypothesis. We show that $v(\Box\tau,w)\notin\left\{\frac{1}{4},\frac{3}{4}\right\}$. Observe that $wRw'=wRw''=\frac{2}{3}$, whence, $v(\Box\tau,w)\!\neq\!\frac{3}{4}$. We are going to check that $v(\Box\tau,w)\!\neq\!\frac{1}{4}$. The proof is similar to the one of (i) but we use~\eqref{equ:Boxlozengeinterdefinabilityback}. If $v(\Box\tau,w)=\frac{1}{4}$, then $v(\tau,w'')=\frac{1}{4}$ by~\eqref{equ:ab}. But from here, we obtain $v(\tau,w')=\frac{1}{5}$ by~\eqref{equ:Boxlozengeinterdefinabilityback}. Contradiction.
\end{proof}
\subsection{Theorem~\ref{theorem:framedefinability}, item~2\label{ssec:theorem:framedefinability}}
We show that $\mathfrak{F}\models_{\KinvG}\invol\triangle(\lozenge\mathbf{1}\rightarrow\invol\lozenge\mathbf{1})$ iff for every $w\in W$ there is $w'$ s.t.\ $wRw'>\frac{1}{2}$.
\begin{proof}
Let $\mathfrak{F}$ be s.t.\ for every $w\in W$ there is $w'$ with $wRw'>\frac{1}{2}$. Hence, $v(\lozenge\mathbf{1})>\frac{1}{2}$ in all $w$'s. But then, $v(\invol\triangle(\lozenge\mathbf{1}\rightarrow\invol\lozenge\mathbf{1}),w)=1$ in all $w$'s too, as required. For the converse, assume that there is some $w$ s.t.\ $wRw'\leq\frac{1}{2}$ for all $w'\in W$. From here, $\sup\{wRw':w'\in W\}\leq\frac{1}{2}$, whence $v(\lozenge\mathbf{1},w)\leq\frac{1}{2}$, and thus, $v(\invol\triangle(\lozenge\mathbf{1}\rightarrow\invol\lozenge\mathbf{1}),w)=0$, as required.
\end{proof}
\subsection{Theorem~\ref{theorem:framedefinability}, item~3\label{ssec:theorem:framedefinability3}}
Let $0<x,x'<1$, and consider two frames: $\mathfrak{F}=\langle\{w\},R\rangle$ and $\mathfrak{F}'=\langle\{w'\},R'\rangle$ with $wRw\!=\!x$ and $w'R'w'\!=\!x'$. Then $\mathfrak{F}\!\models_{\KbiG}\!\phi$ iff $\mathfrak{F}'\!\models_{\KbiG}\!\phi$ for all $\phi\in\bimodalLtriangle$.
\begin{proof}
Consider a function $h:[0,1]\rightarrow[0,1]$ s.t.\ $h(0)=0$, $h(x)=x'$, $h(1)=1$ and also satisfying $y\leq z$ iff $h(y)\leq h(z)$ and define $v'(p,w')=h(v(p,w))$. One can check by induction on $\phi\in\bimodalLtriangle$ that if $v(\phi,w)=y$, then $v'(\phi,w)=h(y)$.

If $\phi=p$, and $v(p,w)=y$, then $v'(p,w')=h(y)$ by construction. If $\phi=\chi\wedge\psi$ and $v(\chi\wedge\psi,w)=y$, let w.l.o.g.\ $v(\chi,w)=y$ and $v(\psi,w)=z\geq y$. Then, we have $v'(\chi,w')=y'$ by the induction hypothesis and $v'(\psi,w')=h(z)\geq h(y)$ by the induction hypothesis and because $y\leq z$ iff $h(y)\leq h(z)$. Hence, $v'(\chi\wedge\psi,w')=h(y)$. If $\phi=\triangle\chi$ and $v(\triangle\chi,w)=y$ observe that $y\in\{0,1\}$. Now, we have that $v(\chi,w)=1$ (if $y=1$) or $v(\chi,w)=z<1$ (if $y=0$). Hence, we have by the induction hypothesis that $v'(\chi,w')=1$ or $v'(\chi,w')=h(z)<1$, respectively. Thus, $v'(\triangle\chi,w')=1$ if $v(\triangle\chi,w)=1$ and $v'(\triangle\chi,w')=0$ if $v(\triangle\chi,w)=0$, as required. The cases of other propositional connectives can be tackled similarly.

Now let $\phi=\Box\chi$. We consider two cases: (a) $v(\Box\chi,w)=1$ and (b) $v(\Box\chi,w)=y<1$. For (a), we obtain that $v(\chi,w)=z\geq wRw=x$. Thus, applying the induction hypothesis, we have $v'(\chi,w')=h(z)\geq w'R'w'=x'$, whence, $v'(\Box\chi,w')=1$. For (b), we have $v(\chi,w)=y<wRw=x$ and by the induction hypothesis, $v'(\chi,w')=h(y)<w'R'w'=x$, whence $v'(\Box\chi,w')=h(y)$, as required.

Thus, we have that $\mathfrak{F}\not\models_{\KbiG}\phi$ implies $\mathfrak{F}'\not\models_{\KbiG}\phi$.
\end{proof}
\section{Proofs of Section~\ref{sec:KblG}}
\subsection{Theorem~\ref{theorem:embedding}, item~1\label{ssec:theorem:embedding1}}
We show that $\mathfrak{F},w\models_{\KinvG}\phi$ iff $\mathfrak{F},w\models_{\KblG}\both\rightarrow\phi^{\Join}$.
\begin{proof}
Note that it suffices to check that $\mathfrak{F},w\models_{\KinvG}\phi$ iff $\mathfrak{F},w\models^+_{\KblG}\phi^{\Join}$ since $v(\both,w)=(1,1)$. Let $v$ be a $\KinvG$ valuation on $\mathfrak{F}$ and define $v_1(p,u)=v(p,u)$ for all $p\in\Prop$ and $u\in W$ ($v_2$ can be arbitrary). We show that $v_1(\phi,u)=v(\phi^{\Join},u)$ for all $\phi\in\bimodalLinv(2)$ by induction. The basis case of variables holds by construction of $v_1$, the cases of propositional connectives hold since semantics of $\KinvG$ and $v_1$ conditions of $\KblG$ coincide (cf.~Definitions~\ref{def:KinvG} and~\ref{def:KblG}) and since $^{\Join}$ does not change propositional connectives.

Let us now consider the most instructive modal case: $\phi=\Box_2\chi$. We have $(\Box_2\chi)^{\Join}=\neg\lozenge\neg\chi^{\Join}$.
\begin{align*}
v(\Box_2\chi,u)&=\inf\{uR^-u'\rightarrow v(\chi,u')\}\tag{$\Box_2$ is associated to $R^-$}\\
&=\inf\{uR^-u'\rightarrow v_1(\chi^{\Join},u')\}\tag{IH}\\
&=\inf\{uR^-u'\rightarrow v_2(\neg\chi^{\Join},u')\}\\
&=v_2(\lozenge\neg\chi^{\Join},u)\\
&=v_1(\neg\lozenge\neg\chi^{\Join},u)
\end{align*}
Thus, we have that $\mathfrak{F},w\not\models_{\KinvG}\phi$ implies $\mathfrak{F},w\not\models^+_{\KblG}\phi^{\Join}$.

For the converse direction, we note, first of all, that all occurrences of $\neg$'s in $\phi$ are of the form $\neg\Box\neg$ and $\neg\lozenge\neg$. The proof is almost the same. Given $v_1$ on $\mathfrak{F}$, define $v^{\Join}(p,u)=v_1(p,u)$ for every $p\in\Prop$ and $u\in W$. We show that $v_1(\phi^{\Join},u)=v^{\Join}(\phi,u)$ by induction. Again, the basis case and the cases of propositional connectives are straightforward. Consider $\phi^{\Join}=\neg\Box\neg\chi^{\Join}$. Then $\phi=\lozenge_2\chi$. We proceed as follows.
\begin{align*}
v_1(\neg\Box\neg\chi^{\Join},u)&=v_2(\Box\neg\chi^{\Join},u)\\
&=\sup\{uR^-u'\wedge v_2(\neg\chi^{\Join},u')\}\\
&=\sup\{uR^-u'\wedge v_1(\chi^{\Join},u')\}\\
&=\sup\{uR^-u'\wedge v^{\Join}(\chi,u')\}\tag{IH}\\
&=v^{\Join}(\lozenge_2\chi,u)\tag{$\lozenge_2$ is associated to $R^-$}
\end{align*}
\end{proof}
\subsection{Theorem~\ref{theorem:embedding}, item~2\label{ssec:theorem:embedding}}
Let $\phi\in\bimodalLinv$, $\chi\in\fullLbilattice$, and $\Box_1$ and $\Box_2$ be associated with $R^+$ and $R^-$, respectively. Then, for every bi-relational pointed frame $\langle\mathfrak{F},w\rangle=\langle\langle W,R^+R^-\rangle,w\rangle$, it holds that $\mathfrak{F},w\models_{\KblG}\chi$ iff $\mathfrak{F},w\models_{\KinvG}\chi^\oplus\!\wedge\!{\sim}(\chi^\ominus)$.

We show by induction that the following statements hold.
\begin{enumerate}[noitemsep,topsep=2pt]
\item[$(a)$] $\mathfrak{F},w\models_{\KinvG}\chi^\oplus$ iff $\mathfrak{F},w\models^+_{\KblG}\chi$
\item[$(b)$] $\mathfrak{F},w\models_{\KinvG}{\sim}(\chi^\ominus)$ iff $\mathfrak{F},w\models^-_{\KblG}\chi$
\end{enumerate}
\begin{proof}
We define $v_1(p,u)=v(p,u)$ and $v_2(p,u)=v(p^*,u)$ for all $u\in\mathfrak{F}$. To prove ‘only if’ direction, we show that $v(\chi^\oplus,u)=v_1(\chi,u)$ and $v(\chi^\ominus,u)=v_2(\chi,u)$ by induction on $\chi$. If $\chi=p$, then $\chi^\oplus=p$ and $\chi^\ominus=p^*$, and the statement holds by construction. The cases of propositional connectives and modalities can be considered in the same manner, so we consider $\chi=-\!\psi$, $\chi=\varrho\wedge\sigma$, $\chi=\Box\psi$, and $\chi=\blacklozenge\psi$.

If $\chi=-\!\psi$, then $\chi^\oplus=\invol(\psi^\ominus)$ and $\chi^\ominus=\invol(\chi^\oplus)$. We proceed as follows.
\begin{minipage}{.49\textwidth}
\begin{align*}
v(\invol(\psi^\ominus),u)&=1-v(\psi^\ominus,u)\\
&=1-v_2(\psi,u)\tag{IH}\\
&=v_1(-\!\psi,u)
\end{align*}
\end{minipage}
\hfill
\begin{minipage}{.49\textwidth}
\begin{align*}
v(\invol(\psi^\oplus),u)&=1-v(\psi^\oplus,u)\\
&=1-v_1(\psi,u)\tag{IH}\\
&=v_2(-\!\psi,u)
\end{align*}
\end{minipage}

If $\chi=\varrho\wedge\sigma$, then $\chi^\oplus=\varrho^\oplus\wedge\sigma^\oplus$ and $\chi^\ominus=\sigma^\ominus\vee\varrho^\ominus$. We proceed as follows.

\noindent
\begin{minipage}{.49\textwidth}
\begin{align*}
v(\varrho^\oplus\!\wedge\!\sigma^\oplus,u)&=\min(v(\varrho^\oplus,u),v(\sigma^\oplus,u))\\
&=\min(v_1(\varrho,u),v_1(\sigma,u))\tag{IH}\\
&=v_1(\varrho\!\wedge\!\sigma,u)
\end{align*}
\end{minipage}
\hfill
\begin{minipage}{.49\textwidth}
\begin{align*}
v(\varrho^\ominus\!\wedge\!\sigma^\ominus,\!u)&=\max(v(\varrho^\ominus,\!u),v(\sigma^\ominus,\!u))\\
&=\max(v_2(\varrho,u),v_2(\sigma,\!u))\tag{IH}\\
&=v_2(\varrho\!\wedge\!\sigma,\!u)
\end{align*}
\end{minipage}

If $\chi=\Box\psi$, then $\chi^\oplus=\Box_1\psi^\oplus$ and $\chi^\ominus=\lozenge_2\psi^\ominus$. We proceed as follows.
\begin{align*}
v(\Box_1\psi^\oplus,u)&=\inf\limits_{u'\in W}\{uR^+u'\rightarrow v(\psi^\oplus,u')\}\tag{$\Box_1$ is associated to $R^+$}\\
&=\inf\limits_{u'\in W}\{uR^+u'\rightarrow v_1(\psi,u')\}\tag{IH}\\
&=v_1(\Box\psi,u)
\end{align*}
\begin{align*}
v(\lozenge_2\psi^\ominus,w)&=\sup\limits_{u'\in W}\{uR^-u'\wedge v(\psi^\ominus,u')\}\tag{$\lozenge_2$ is associated to $R^-$}\\
&=\inf\limits_{u'\in W}\{uR^-u'\wedge v_2(\psi,u')\}\tag{IH}\\
&=v_2(\Box\psi,u)
\end{align*}
If $\chi=\blacksquare\psi$, then $\chi^\oplus=\Box_1\psi^\oplus$ and $\chi^\ominus=\Box_2\psi^\ominus$. We proceed as follows.
\begin{align*}
v(\Box_1\psi^\oplus,u)&=\inf\limits_{u'\in W}\{uR^+u'\rightarrow v(\psi^\oplus,u')\}\tag{$\Box_1$ is associated to $R^+$}\\
&=\inf\limits_{u'\in W}\{uR^+u'\rightarrow v_1(\psi,u')\}\tag{IH}\\
&=v_1(\blacksquare\psi,u)
\end{align*}
\begin{align*}
v(\Box_2\psi^\ominus,u)&=\sup\limits_{u'\in W}\{uR^-u'\rightarrow v(\psi^\ominus,u')\}\tag{$\Box_2$ is associated to $R^-$}\\
&=\inf\limits_{u'\in W}\{uR^-u'\rightarrow v_2(\psi,u')\}\tag{IH}\\
&=v_2(\Box\psi,u)
\end{align*}
From here, we obtain for every $u\in W$ that if $v(\chi^\oplus,u)\neq1$, then $v_1(\chi,u)\neq1$ and if $v({\sim}(\chi^\ominus),u)\neq1$, then $v_2(\chi,u)\neq0$.

For the converse, given $v_1$ and $v_2$ on $\mathfrak{F}$, we define $v^{\Join}(p,u)=v_1(p,u)$ and $v^{\Join}(p^*,u)=v_2(p,u)$. Now, it suffices to show that
\begin{align*}
v^{\Join}(\chi^\oplus,u)&=v_1(\chi,u)&v^{\Join}(\chi^\ominus,u)&=v_2(\chi,u)
\end{align*}
We proceed by induction. The statement holds for $\chi=p$ by the construction of $v^{\Join}$. Since the induction steps can be demonstrated similarly, we only show $\chi=-\!\psi$, $\chi=\varrho\wedge\sigma$, and $\chi=\lozenge\psi$.

For $\chi=-\!\psi$, we have $(-\!\psi)^\oplus=\invol\psi^\ominus$ and $(-\!\psi)^\ominus=\invol\psi^\oplus$.

\begin{minipage}{.49\textwidth}
\begin{align*}
v_1(-\psi,u)&=1-v_2(\psi,u)\\
&=1-v^{\Join}(\psi^\ominus)\tag{IH}\\
&=v^{\Join}(\invol\psi^\ominus,u)
\end{align*}
\end{minipage}
\hfill
\begin{minipage}{.49\textwidth}
\begin{align*}
v_2(-\psi,u)&=1-v_1(\psi,u)\\
&=1-v^{\Join}(\psi^\oplus)\tag{IH}\\
&=v^{\Join}(\invol\psi^\oplus,u)
\end{align*}
\end{minipage}

For $\chi=\varrho\wedge\sigma$, we have $(\varrho\wedge\sigma)^\oplus=\varrho^\oplus\wedge\sigma^\oplus$ and $(\varrho\wedge\sigma)^\ominus=\varrho^\ominus\vee\varrho^\ominus$.

\noindent
\begin{minipage}{.49\linewidth}
\begin{align*}
v_1(\varrho\!\wedge\!\sigma,u)&=\min(v_1(\varrho,u),v_1(\sigma,u))\\
&=\min(v^{\Join}(\varrho^\oplus,u),v^{\Join}(\sigma^\oplus,u))\tag{IH}\\
&=v^{\Join}(\varrho^\oplus\wedge\sigma^\oplus,u)
\end{align*}
\end{minipage}
\hfill
\begin{minipage}{.49\textwidth}
\begin{align*}
v_2(\varrho\!\wedge\!\sigma,u)&=\max(v_2(\varrho,u),v_2(\sigma,u))\\
&=\max(v^{\Join}(\varrho^\ominus,u),v^{\Join}(\sigma^\ominus,u))\tag{IH}\\
&=v^{\Join}(\varrho^\ominus\vee\sigma^\ominus,u)
\end{align*}
\end{minipage}

Finally, for $\chi=\Box\psi$, we have $(\Box\psi)^\oplus=\Box_1\psi^\oplus$ and $(\Box\psi)^\ominus=\lozenge_2\psi^\ominus$.
\begin{align*}
v_1(\Box\psi,u)&=\inf\limits_{u'\in W}\{uR^+u'\rightarrow v_1(\psi,u')\}\\
&=\inf\limits_{u'\in W}\{uR^+u'\rightarrow v^{\Join}(\psi^\oplus,u')\}\\
&=v^{\Join}(\Box_1\psi^\oplus,u)\tag{$\Box_1$ is associated to $R^+$}
\end{align*}
\begin{align*}
v_2(\Box\psi,u)&=\sup\limits_{u'\in W}\{uR^-u'\wedge v_2(\psi,u')\}\\
&=\sup\limits_{u'\in W}\{uR^-u'\wedge v^{\Join}(\psi^\ominus,u')\}\\
&=v^{\Join}(\lozenge_2\psi^\ominus,u)\tag{$\lozenge_2$ is associated to $R^-$}
\end{align*}
From here, we have for every $u\in W$ that if $v_1(\chi,u)\neq1$, then $v^{\Join}(\chi^\oplus,u)\neq1$ (and thus, $v^{\Join}(\chi^\oplus\wedge{\sim}\chi^\ominus,w)\neq1$); and that if $v_2(\chi,u)\neq0$, then $v^{\Join}(\chi^\ominus,u)\neq0$, whence $v^{\Join}(\chi^\oplus\wedge{\sim}\chi^\ominus,w)=0$.
\end{proof}
\section{Proofs of Section~\ref{sec:TKinvG}}
\subsection{Theorem~\ref{theorem:TKblGcompleteness} (soundness)\label{ssec:theorem:TKblGcompleteness:soundness}}
We show that if $\mathfrak{M}$ realises the premise of a~rule, then it also realises one of its conclusions. Soundness follows from here since no closed branch is realisable.
\begin{proof}
We consider $\Rightarrow_\triangleright$ (with $\triangleright$ being $>$) and $\xcancel{\Box}_\approx$ (other rules can be tackled similarly).

Let $w:\mathbf{1}:\chi\rightarrow\psi>\mathfrak{X}$ be realised by $\mathfrak{M}$ and $\real(\mathfrak{X})=z$, i.e., $v_1(\chi\rightarrow\psi,w)>z$. Then, there are two options. First, $v_1(\psi,w)>z$. In this case, the left conclusion is realised. Second, it is possible that $v_1(\chi\rightarrow\psi,w)=1$ and $z<1$. Hence, $v_1(\chi,w)\leq v_1(\psi,w)$, whence there is some $c$ s.t.\ $v_1(\psi,w)=c$ and $v_1(\chi,w)\leq c$. Thus, the right conclusion is realised.

Assume that $w:\mathbf{1}:\xcancel{\Box}\psi\approx\mathfrak{X}$ be realised by $\mathfrak{M}$ and $\real(\mathfrak{X})=z$. Then $v_1(\xcancel{\Box}\psi,w)=z$. If $z=1$, then $\real(\mathfrak{X})=\real(1)=\real(w:\mathbf{1}:\xcancel{\Box}\psi)$ (i.e., the left conclusion is realised). Otherwise, $z\in T_1$ and $z=\max\{x\in T_1(w)\mid x\leq\inf\limits_{w'\in W}\{wR^+w'\rightarrow v_1(\psi,w')\}\}$. Thus, there is $w'$ s.t.\ $v_1(\phi,w')<w\Rmsf^+w'$ and $z\leqslant v_1(\psi,w')<\smsf(z)$ with $\smsf(z)=\min\{y\in T_1\mid y>x\}$. Hence, the right conclusion is realised.
\end{proof}
\end{document}